\newtheorem*{theorem1}{Theorem 1}
\newtheorem*{theorem2}{Theorem 2}
\newtheorem{theorem}{Theorem}[section]
\newtheorem{lemma}[theorem]{Lemma}
\newtheorem{corollary}[theorem]{Corollary}
\theoremstyle{definition}
\newtheorem{definition}[theorem]{Definition}
\newtheorem{example}[theorem]{Example}
\theoremstyle{remark}
\newtheorem{remark}[theorem]{Remark}
\numberwithin{equation}{section}
\begin{document}

\def\temptablewidth{0.5\textwidth}

\def\Z{\mathbb{Z}}
\def\C{\mathbb{C}}
\def\R{\mathbb{R}}
\def\Q{\mathbb{Q}}
\def\D{\mathbb{D}}

\title[Some remarks on stable almost complex structures on manifolds]{Some remarks on stable  almost complex structures on manifolds}

\author[Huijun Yang]{Huijun Yang}

\begin{abstract}
Let $X$ be an $(8k+i)$-dimensional pathwise connected $CW$-complex with $i=1$ or $2$ and $k\ge0$, $\xi$ be a real vector bundle over $X$. Suppose that $\xi$ admits a stable complex structure over the $8k$-skeleton of $X$. Then we get that $\xi$ admits a stable complex structure over $X$ if the Steenrod square
$$\mathrm{Sq}^{2}\colon H^{8k-1}(X;\Z/2)\rightarrow H^{8k+1}(X;\Z/2)$$ 
is surjective. As an application, let $M$ be a $10$-dimensional manifold  with no $2$-torsion in $H_{i}(M;\Z)$ for $i=1,2,3$, and no $3$-torsion in $H_{1}(M;\Z)$. Suppose that the Steenrod square 
     $$\mathrm{Sq}^{2}\colon H^{7}(M;\Z/2)\rightarrow H^{9}(M;\Z/2)$$ is surjective. Then the necessary and sufficient conditions for the existence of a stable almost complex structure on $M$ are given in terms of the cohomology ring and characteristic classes of $M$. \\[6pt]
   \textbf{Keywords.} ~Stable almost complex structure, Obstructions, Atiyah-Hirzebruch spectral sequence, Differentiable Riemann-Roch theorem \\[6pt]
   \textbf{MSC2010.}~53C15, 57M50, 19L64
\end{abstract}

\maketitle


\section{Introduction}
First we introduce some notations. For a topological space $X$, let $Vect_{\C}(X)$ (resp. $Vect_{\R}(X)$)
be the set of isomorphic classes of complex (resp. real) vector
bundles on $X$, and let $\widetilde{K}(X)$ (resp. $\widetilde{KO}(X)$) be the reduced $KU$-group (resp. reduced $KO$-group) of X, which is the set of stable equivalent classes of complex (resp. real) vector bundles over $X$.
For a map $f\colon X\rightarrow Y$
between topological spaces $X$~and~$Y$,~denote by
$$f_{u}^{\ast}\colon
\widetilde{K}(Y)\rightarrow \widetilde{K}(X)\quad \text{and}\quad f_{o}^{\ast}\colon
\widetilde{KO}(Y)\rightarrow \widetilde{KO}(X)$$
the induced
homomorphisms. For $\xi\in Vect_{\R}(X)$ (resp. $\eta\in Vect_{\C}(X)$), we will denote by $\tilde{\xi}\in \widetilde{KO}(X)$ (resp. $\tilde{\eta}\in\widetilde{K}(X)$) the stable class of $\xi$ (resp. $\eta$). 
Moreover, we will denote by
\begin{align*}
   w_{i}(\xi)&=w_{i}(\tilde{\xi}) &~&~\text{the $i$-th Stiefel-Whitney class of $\xi$,}\\
   p_{i}(\xi)&=p_{i}(\tilde{\xi}) &~&~\text{the $i$-th Pontrjagin class of $\xi$,}\\
   c_{i}(\eta)&=c_{i}(\tilde{\eta}) &~&~\text{the $i$-th Chern class of $\eta$,}\\
   c&h(\tilde{\eta})  &~&~\text{the Chern character of $\tilde{\eta}$.}
\end{align*}
In particular, if $X$ is a smooth manifold, we denote by
\begin{align*}
  w_{i}(X)&=w_{i}(TX) &~&~\text{the $i$-th Stiefel-Whitney class of $X$,}\\
  p_{i}(X)&=p_{i}(TX) &~&~\text{the $i$-th Pontrjagin class of $X$,}
\end{align*}
where $TX$ is the tangent bundle of $X$.

It is known that there are natural homomorphisms
\begin{align*}
&\tilde{r}_{X}\colon \widetilde{K}(X)\rightarrow\widetilde{KO}(X)&~& \text{the real reduction},\\
&\tilde{c}_{X}\colon \widetilde{KO}(X)\rightarrow\widetilde{K}(X) &~&\text{the complexification},
\end{align*}
which induced from
\begin{align*}
&r_{X}\colon Vect_{\C}(X)\rightarrow Vect_{\R}(X)&~&\text{the real reduction},\\
&c_{X}\colon Vect_{\R}(X)\rightarrow Vect_{\C}(X)&~&\text{the complexification},
\end{align*}
respectively. Let $\xi\in Vect_{\R}(X)$ be a real vector bundle over $X$. We say that $\xi$ admits a \emph{stable complex structure} over $X$ if there exists a complex vector bundle $\eta$ over $X$ such that $\tilde{r}_{X}(\tilde{\eta})=\tilde{\xi}$, that is $\tilde{\xi}\in \mathrm{Im}~\tilde{r}_{X}$. In particular, if $X$ is a smooth manifold, we say that $X$ admits a \emph{stable almost complex structure} if $TX$ admits a stable complex structure.

Let $U=\lim\limits_{n\rightarrow \infty} U(n)$ (resp.~$SO=\lim\limits_{n\rightarrow \infty} SO(n)$) be the stable unitary (resp. special orthogonal) group. Denote by $\Gamma=SO/U$. Let $X^{q}$ be the $q$-skeleton of $X$, and denote by $i\colon X^{q}\rightarrow X$ the inclusion map of $q$-skeleton of $X$.
Suppose that $\xi$ admits a stable complex structure over $X^{q}$, that is there exists a complex vector bundle $\eta$ over $X^{q}$ such that  $$i_{o}^{\ast}(\tilde{\xi}) = \tilde{r}_{X^{q}}(\tilde{\eta}).$$ Then the  obstruction to extending $\eta$ over the $(q+1)$-skeleton of $X$ is denoted by $$\mathfrak{o}_{q+1}(\eta)~\in~H^{q+1}(X,\pi_{q}(\Gamma))$$
where
\begin{equation*}\pi_{q}(\Gamma)=
\begin{cases}
\mathbb{Z}, & q\equiv 2\mod{4},\\
\mathbb{Z}/2, & q\equiv 0,~-1\mod{8},\\
0, & otherwise.
\end{cases}
\end{equation*}
(cf. Bott \cite{bo} or Massey \cite[p.560]{ma}).

If $q\equiv 2\mod 4$, that is the coefficient group $\pi_{q}(\Gamma)=\mathbb{Z}$, the obstructions $\mathfrak{o}_{q+1}(\eta)$ have been investigated by Massey \cite[Theorem I]{ma}. For example, 
\begin{equation}\label{eq:o3o7}
\mathfrak{o}_{3}(\eta)=\beta w_{2}(\xi),\quad \mathfrak{o}_{7}(\eta)=\beta w_{6}(\xi),
\end{equation}
where $\beta\colon H^{i}(M;\Z/2)\rightarrow H^{i+1}(M;\Z)$ is the Bockstein homomorphism.
Moreover, if $q\equiv -1\mod8$, hence  $\pi_{q}(\Gamma)=\mathbb{Z}/2$, the obstruction $\mathfrak{o}_{8}(\eta)$ has been studied by Massey \cite[Theorem III]{ma} , Thomas \cite[Theorem 1.2]{thomas} , Heaps \cite{heaps}, M. \v{C}adek, M. Crabb and J. Van\v{z}ura \cite[Proposition 4.1 (a)]{ccv}, Dessai \cite[Theorem 1.9]{dessai}  and Yang\cite[Corollary 1.6]{young15}, etc. Furthermore, when $X$ is a $n$-dimensional closed oriented smooth manifold  with $n\equiv0\mod8$, the obstruction $\mathfrak{o}_{n}(\eta)$ is determined by Yang \cite[Theorem 1.1]{young15}. However, in the case $q\equiv0\mod8$, we only known that $\mathfrak{o}_{q+1}(\eta)=0$ if $H^{q+1}(X;\Z/2)=0$.

In this paper, the obstructions $\mathfrak{o}_{q+1}(\eta)$ for $q\equiv0\mod8$ are investigated and our results can be state as follows.

Denote by $\mathrm{Sq}^{2}\colon H^{i}(X;\Z/2)\rightarrow H^{i+2}(X;\Z)$ the Steenrod square.

\begin{theorem1}
 Let $X$ be a $(8k+i)$-dimensional pathwise connected $CW$-complex with $i=1$ or $2$ and $k$ the non-negative integral number, $\xi$ be a real vector bundle over $X$. Suppose that $X$ satisfying the condition
 \begin{equation}\label{sts}
 \mathrm{Sq}^{2}\colon H^{8k-1}(X;\Z/2)\rightarrow H^{8k+1}(X;\Z/2) \text{\quad is surjective.} \tag{$\ast$}
 \end{equation} 
 Then $\xi$ admits a stable complex structure over $X$ if and only if it admits a stable complex structure over $X^{8k}$.
\end{theorem1}

\begin{remark}
This theorem tell us that the final obstruction $$\mathfrak{o}_{8k+1}(\eta)=0$$ if  $X$ satisfying the condition \eqref{sts}. The following examples tell us that the $CW$-complexes which satisfying the condition \eqref{sts} are exists .
\end{remark}

\begin{example}
Trivial case: the $CW$-complex $X$ with $H^{8k+1}(X;\Z/2)=0$.
\end{example}

\begin{example}
Denote by $G_{k}(\R^{n+k})$ the Grassmannian of $k$-planes in real $n+k$ dimensional space $\R^{n+k}$, $\zeta_{k,n}$ the universal $k$-plane bundle over $G_{k}(\R^{n+k})$. It is known that (cf. Borel \cite{borel53}) $\dim G_{k}(\R^{n+k})=kn$ and
\begin{equation*}
H^{\ast}(G_{k}(\R^{n+k});\Z/2)\cong\Z/2[w_{1},\cdots, w_{k}, \bar{w_{1}},\cdots, \bar{w_{n}}]/I_{k,n}
\end{equation*}
where  $w_{i}=w_{i}(\zeta_{k,n})$ and $I_{k,n}$ is the ideal generated by the equations
$$(1+w_{1}+\cdots+w_{k})(1+\bar{w_{1}}+\cdots+\bar{w_{n}})=0.$$
Hence we get that $\dim G_{3}(\R^{6})=9$, $\dim G_{2}(\R^{7})=10$ and 
\begin{align*}
H^{\ast}(G_{3}(\R^{6});\Z/2)&\cong\Z/2[w_{1}, w_{2}, w_{3}]/\langle w_1^{4}+w_{1}^{2}w_{2}+w_{2}^{2}, w_{1}^{3}w_{2}+w_{1}^{2}w_{3}, w_{1}^{3}w_{3}+w_{3}^{2}\rangle,\\
H^{\ast}(G_{2}(\R^{7});\Z/2)&\cong\Z/2[w_{1}, w_{2}]/\langle w_1^{6}+w_{1}^{4}w_{2}+w_{2}^{3}, w_{1}^{5}w_{2}+w_{1}w_{2}^{3}\rangle.
\end{align*}
Therefore $$H^{9}(G_{3}(\R^{6});\Z/2)\cong\Z/2,\quad H^{9}(G_{2}(\R^{7});\Z/2)\cong\Z/2,$$
are all  generated by $w_{1}^{5}w_{2}^{2}$ and we have 
$$\mathrm{Sq}^{2}(w_{1}^{5}w_{2})=w_{1}^{5}w_{2}^{2}.$$
\end{example}


\begin{example}\label{exm:c}
Denote by $\C P^{2}$ the $2$-dimensional complex projective space, $S^{l}$ the $l$-dimensional sphere. 
One may easily verify that the following manifolds are all satisfying  the condition \eqref{sts}:
\begin{itemize}
\item $M_{1}=\C P^{2}\times S^{5}\times S^{1}$, 
\item $M_{2}=M_{1}\times S^{8k}, k>0$,
\item $\sharp_{n}M_{i}$ the connected sum of $n$ copies of $M_{i}, i=1,2$,
\item etc.
\end{itemize}
\end{example}

Let $M$ be an $n$-dimensional closed oriented smooth manifold. It is a classical topic in geometry and topology to determine the necessary and sufficient conditions for $M$ to admits a stable almost complex structure. These are only known in the case $n\le8$ (cf. \cite{wu}, \cite{eh}, \cite{ma}, \cite{thomas}, \cite{heaps}, \cite{cv}, \cite{ccv}, \cite{young15}, etc.). If $n=10$, Thomas and Heaps determined these conditions in the case $H_{1}(M;\Z/2)=0$ and $w_{4}(M)=0$ (cf. \cite[Theorem 1.6]{thomas} and \cite[Theorem 2]{heaps}). Moreover, Dessai \cite[Theorems 1.2, 1.9]{dessai} got them in the case $H_{1}(M;\Z)=0$ and $H_{i}(M;\Z), i=2,3$ has no $2$-torsion, no assumption on $w_{4}(M)$ is made.

One may see from the proof of \cite[Theorem 1.6]{thomas} and \cite[Theorem 2]{heaps} that the assumption $H_{1}(M;\Z/2)=0$ is used just to guarantee that the final obstruction $\mathfrak{o}_{9}(\eta)=0$.  So as an application of Theorem $1$, the assumption $H_{1}(M;\Z/2)=0$ in \cite[Theorem 1.6]{thomas} and \cite[Theorem 2]{heaps} can be replaced by the assumption that the Stennrod square $\mathrm{Sq}^{2}\colon H^{7}(M;\Z/2)\rightarrow H^{9}(M;\Z/2)$ is surjective. That is trivial, we will not list them here.

As the second application of Theorem $1$, we can get the following result.

From now on,  $M$ will be a $10$-dimensional closed oriented smooth manifold with no $2$-torsion in $H_{i}(M;\Z), i=1,2,3$ and no $3$-torsion in $H_{1}(M;\Z)$. We may also suppose that the Steenrod square 
$$\mathrm{Sq}^{2}\colon H^{7}(M;\Z/2)\rightarrow H^{9}(M;\Z/2)$$ is surjective. 

Since $H_{2}(M;\Z)$ has no $2$-torsion, it follows from the universal coefficient theorem that $H^{3}(M;\Z)$ has no $2$-torsion. Hence the $\mod 2$ reduction homomorphism
 $$\rho_{2}\colon H^{2}(M;\mathbb{Z})\rightarrow H^{2}(M;\mathbb{Z}/2)$$ 
 is surjective by using the long exact Bockstein sequence associated to the coefficient sequence
$$0\rightarrow \mathbb{Z}\xrightarrow{2}\mathbb{Z}\rightarrow\mathbb{Z}/2\rightarrow0.$$
Therefore $M$ is $spin^{c}$. We may fix an element $c\in H^{2}(M;\Z)$ satisfying $$\rho_{2}(c)=w_{2}(M).$$

\begin{definition}
Set $$\mathcal{D}(M)\triangleq\{x\in H^{2}(M;\Z)\mid x^{2}+cx=2z_{x} \text{~for some~}z_{x}\in H^{4}(M;\Z)\}$$
where ( uniquely determined ) $z_{x}$ is depending on $x$. One may find that $\mathcal{D}(M)$ is a subgroup of $H^{2}(M;\Z)$ and it does not depend on the choice of $c$.
\end{definition}

\begin{theorem2}
Let $M$ be a $10$-dimensional closed oriented smooth manifold with no $2$-torsion in $H_{i}(M;\Z), ~i=1,2,3$, no $3$-torsion in $H_{1}(M;\Z)$. Suppose that the Steenrod square 
$$\mathrm{Sq}^{2}\colon H^{7}(M;\Z/2)\rightarrow H^{9}(M;\Z/2)$$
is surjective. Then $M$ admits a stable almost complex structure if and only if 
\begin{equation}\label{sacs}
w_{4}^{2}(M)\cdot\rho_{2}(x) = (\mathrm{Sq}^{2}\rho_{2}(z_{x}))\cdot w_{4}(M)
\end{equation}
holds for every $x\in \mathcal{D}(M)$.
\end{theorem2}


\begin{remark}
This theorem is a generalization of Dessai \cite[Theorem]{dessai}. In fact, the congruence \eqref{sacs} is a simplification of the congruence $(1.2)$ in \cite[Theorem 1.2]{dessai}. 
\end{remark}

\begin{remark}
One may find that we only need to check the congruence \eqref{sacs} for the generators of $\mathfrak{D}(M)$.
\end{remark}

Obviously, one can get that

\begin{corollary}
Let $M$ be as in Theorem $2$. Suppose that $w_{4}(M)=0$. Then $M$ always admits a stable almost complex structure.
\end{corollary}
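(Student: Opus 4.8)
The plan is to deduce this directly from Theorem $2$ by observing that the hypothesis $w_{4}(M)=0$ forces both sides of the congruence \eqref{sacs} to vanish identically. First I would substitute $w_{4}(M)=0$ into the left-hand side: since $w_{4}^{2}(M)=\big(w_{4}(M)\big)^{2}$, we obtain $w_{4}^{2}(M)\cdot\rho_{2}(x)=0$ for every $x\in\mathcal{D}(M)$. Next I would treat the right-hand side, where the factor $w_{4}(M)$ enters linearly, so that $\big(\mathrm{Sq}^{2}\rho_{2}(z_{x})\big)\cdot w_{4}(M)=0$ as well, regardless of the value of $z_{x}$. Hence the congruence \eqref{sacs} reduces to the trivial identity $0=0$ and therefore holds for every $x\in\mathcal{D}(M)$.

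Once both sides are seen to vanish, the criterion of Theorem $2$ is met vacuously, and the sufficiency (``if'') direction of Theorem $2$ immediately yields that $M$ admits a stable almost complex structure. There is no genuine obstacle here: the corollary is a trivial specialization of Theorem $2$, and the only point worth recording is that $M$ continues to satisfy the standing hypotheses of Theorem $2$—no $2$-torsion in $H_{i}(M;\Z)$ for $i=1,2,3$, no $3$-torsion in $H_{1}(M;\Z)$, and surjectivity of $\mathrm{Sq}^{2}\colon H^{7}(M;\Z/2)\rightarrow H^{9}(M;\Z/2)$—which is precisely what is assumed by taking $M$ as in Theorem $2$.
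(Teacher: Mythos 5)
Your proposal is correct and matches the paper's (implicit) argument: the paper states this corollary as an immediate consequence of Theorem $2$, since $w_{4}(M)=0$ makes both sides of the congruence \eqref{sacs} vanish identically. Nothing further is needed.
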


\begin{remark}
Compare Thomas \cite[Theorem 1.6]{thomas}.
\end{remark}

For $M$ which has "nice" cohomology, we have
\begin{corollary}\label{coro:h}
Let $M$ be as in Theorem $2$. Assume in addition that $H^{2}(M;\Z)$ is generated by $h$ and $h^{2}\nequiv0\mod2$. Then $M$ always admits a stable complex structure.
\end{corollary}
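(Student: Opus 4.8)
The plan is to verify the congruence \eqref{sacs} for every $x\in\mathcal{D}(M)$ directly; by Theorem $2$ this gives the conclusion, and by the preceding Remark it suffices to check \eqref{sacs} on a generator of $\mathcal{D}(M)$. Throughout I work with the mod $2$ reduction $\rho_{2}$ and the Wu formula on the closed oriented $10$-manifold $M$.

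First I would fix the mod $2$ picture. Since $H_{2}(M;\Z)$ and $H_{3}(M;\Z)$ have no $2$-torsion, the universal coefficient theorem gives $H^{2}(M;\Z/2)\cong H^{2}(M;\Z)\otimes\Z/2$, so under the hypothesis that $H^{2}(M;\Z)$ is generated by $h$ this group is $\Z/2$ generated by $u:=\rho_{2}(h)$, and the extra hypothesis $h^{2}\not\equiv0 \pmod 2$ says exactly that $u^{2}=\rho_{2}(h^{2})\neq0$ in $H^{4}(M;\Z/2)$. Writing $c=mh$, so that $w_{2}(M)=\rho_{2}(c)=\bar m\,u$, every $x\in H^{2}(M;\Z)$ is $x=nh$ with $x^{2}+cx=n(n+m)h^{2}$. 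Because $u^{2}\neq0$, the kernel of $\rho_{2}$ in degree $4$ is $2H^{4}(M;\Z)$, so $x=nh$ lies in $\mathcal{D}(M)$ precisely when $n(n+m)$ is even, and in that case $z_{x}=\tfrac{n(n+m)}{2}h^{2}$ and $\rho_{2}(z_{x})=\overline{n(n+m)/2}\,u^{2}$.

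Next I would split according to the parity of $m$. If $m$ is even, then $x=nh\in\mathcal{D}(M)$ iff $n$ is even, so $\mathcal{D}(M)=\langle 2h\rangle$; on the generator $x=2h$ one has $\rho_{2}(x)=0$ and $\overline{n(n+m)/2}=\overline{(2+m)}=0$, so both sides of \eqref{sacs} vanish. If $m$ is odd, then $n(n+m)$ is always even, so $\mathcal{D}(M)=\langle h\rangle$ and $u=w_{2}(M)$; on the generator $x=h$ the right-hand side is a multiple of $\mathrm{Sq}^{2}(u^{2})$, and the Cartan formula together with $\mathrm{Sq}^{1}u=0$ (valid since $u$ is the reduction of the integral class $h$) gives $\mathrm{Sq}^{2}(u^{2})=(\mathrm{Sq}^{1}u)^{2}=0$, so the right-hand side again vanishes.

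It remains to show that the left-hand side $w_{4}^{2}(M)\cdot u=w_{4}^{2}(M)w_{2}(M)$ vanishes, which is the one genuinely nontrivial point and the main obstacle. Here I would invoke the Wu formula: since $v_{1}=w_{1}(M)=0$ and $v_{2}=w_{2}(M)$, and $w_{4}^{2}(M)\in H^{8}(M;\Z/2)$, we get $w_{2}w_{4}^{2}=v_{2}w_{4}^{2}=\mathrm{Sq}^{2}(w_{4}^{2})=(\mathrm{Sq}^{1}w_{4})^{2}$ by the Cartan formula. Now $\mathrm{Sq}^{1}w_{4}\in H^{5}(M;\Z/2)$, and any $y\in H^{5}(M;\Z/2)$ satisfies $y^{2}=\mathrm{Sq}^{5}y=\mathrm{Sq}^{1}\mathrm{Sq}^{4}y$ by the Adem relation $\mathrm{Sq}^{1}\mathrm{Sq}^{4}=\mathrm{Sq}^{5}$, while $\mathrm{Sq}^{1}$ on $H^{9}(M;\Z/2)$ is cup product with $v_{1}=0$; hence $y^{2}=0$ and in particular $(\mathrm{Sq}^{1}w_{4})^{2}=0$. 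Thus both sides of \eqref{sacs} vanish on a generator of $\mathcal{D}(M)$ in every case, so the congruence \eqref{sacs} holds and Theorem $2$ shows that $M$ admits a stable almost complex structure. Everything except the final vanishing $w_{4}^{2}w_{2}=0$ is routine bookkeeping with the cyclic group $H^{2}(M;\Z)$ and the reduction $\rho_{2}$.
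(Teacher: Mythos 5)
Your proof is correct and follows essentially the same route as the paper: split into the spin and non-spin cases according to whether $\mathcal{D}(M)$ is generated by $2h$ or $h$, and show both sides of \eqref{sacs} vanish on the generator, the key step being $w_{2}(M)w_{4}^{2}(M)=\mathrm{Sq}^{2}(w_{4}^{2}(M))=(\mathrm{Sq}^{1}w_{4}(M))^{2}=0$. The only (cosmetic) differences are that you keep a general $c=mh$ instead of normalizing $c=h$, and you obtain the final vanishing from $\mathrm{Sq}^{5}=\mathrm{Sq}^{1}\mathrm{Sq}^{4}$ together with $V_{1}=0$, which is precisely the computation behind the paper's appeal to $V_{5}=0$.
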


\begin{remark}
Compare Dessai \cite[Corollary 1.3]{dessai}.
\end{remark}

\begin{example}
One can deduced easily from Corollary \ref{coro:h}  that the manifold $\C P^{2}\times S^{5}\times S^{1}$ (in Example \ref{exm:c}) must admits a stable complex structure. In fact, $\C P^{2}\times S^{5}\times S^{1}$ is a complex manifold because both $\C P^{2}$ and $S^{5}\times S^{1}$ are complex manifold. Moreover, it follows from Theorem $2$ and the cohomology ring of $\sharp_{n}\C P^{2}\times S^{5}\times S^{1}, n\ge 1$ that they all admit a stable almost complex structure.   
\end{example}

This paper is arranged as follows. Firstly, the Theorem $1$ is proved in \S $2$. Then in order to prove the Theorem $2$ and the Corollary \ref{coro:h} in \S $4$ , we investigated the obstruction to extend complex vector bundles over the $(2q-1)$-skeleton of $X$ to $(2q+1)$-skeleton in \S $3$.

\section{The proof of Theorem $1$}

Let $X$ be a $(8k+i)$-dimensional pathwise connected $CW$-complex, $i=1\text{~or~}2$.
In this section, combining the Bott exact sequence with the Atiyah-Hirzebruch spectral sequence of $KO^{\ast}(X)$, we give the proof of Theorem $1$.


Let $BU$ (resp. $BO$) be the classifying space of the stable unitary group $U$ (resp. stable orthogonal group $O$).
Since $O/U$ is homotopy equivalent to $\Omega^{-6}BO$ (cf. \cite{bo}), the canonical fibering
$$O/U\hookrightarrow BU\rightarrow BO$$
gives rise to a long exact sequence of $K$-groups ( we call it the Bott exact sequence ):
\begin{equation}\label{bsr}
  \cdots\rightarrow\widetilde{KO}^{q-2}(X)\rightarrow\widetilde{K}^{q}(X)\xrightarrow{\tilde{r}_{X}} \widetilde{KO}^{q}(X)\xrightarrow{\gamma}\widetilde{KO}^{q-1}(X)\rightarrow\cdots
\end{equation}
which is similar to the exact sequence given by Bott in \cite[p.75]{bottlk}.

According to Switzer \cite[pp.336-341]{switzer}, the Atiyah-Hirzebruch spectral sequence of $KO^{\ast}(X)$ is the spectral sequence $\{E_{r}^{p,q},d_{r}\}$ with
\begin{align*}
  E_{2}^{p,q}~&\cong~H^{p}(X;KO^{q}),\\
  E_{\infty}^{p,q}~&\cong~F^{p,q}/F^{p+1,q-1},
\end{align*}
where
\begin{equation*}
  F^{p,q}= \mathrm{Ker}~[i_{o}^{\ast}\colon KO^{p+q}(X)\rightarrow KO^{p+q}(X^{p-1})],
\end{equation*}
and the coefficient ring of $KO$-theory is (cf. Bott \cite[p. 73]{bottlk})
\begin{equation*}
  KO^{\ast}~=~\mathbb{Z}[\alpha, x, \gamma, \gamma^{-1}]/(2\alpha, \alpha^{3}, \alpha x, x^{2}-4\gamma)
\end{equation*}
with the degrees $\mid\alpha\mid=-1$, $\mid x\mid=-4$ and $\mid\gamma\mid=-8$.

It is well known that the differentials $d_{2}$ of the Atiyah-Hirzebruch spectral sequence of $KO^{\ast}(X)$ are given as follows (see M. Fujii \cite[Formula (1.3)]{fujii} for instance):
\begin{equation}\label{d2}
d_{2}^{\ast,q}=
\begin{cases}
\mathrm{Sq}^{2}\rho_{2}, & q\equiv 0\mod{8},\\
\mathrm{Sq}^{2}, & q\equiv -1\mod{8},\\
0, & \text{otherwise}.
\end{cases}
\end{equation}



Denote by $j\colon X\rightarrow (X, X^{8k})$ and $i\colon X^{8k}\rightarrow X$ the inclusions.
Then we have the following commutative diagram:
\begin{equation}\label{sec3}
\begin{split}
\xymatrix{
\widetilde{K}(X,X^{8k}) \ar[r]^{j_u^{\ast }} \ar[d]_{\tilde{r}} & \widetilde{K}(X) \ar[r]^{i_{u}^{\ast}} \ar[d]_{\tilde{r}_{X}} & \widetilde{K}(X^{8k}) \ar[r]^{\delta_{u}} \ar[d]_{\tilde{r}_{X^{8k}}} & K^{1}(X,X^{8k}) \ar[d]_{\tilde{r}} \\
\widetilde{KO}(X,X^{8k}) \ar[r]^{j_o^{\ast}} \ar[d]_{\gamma} & \widetilde{KO}(X) \ar[r]^{i_{o}^{\ast}} \ar[d]_{\gamma_{X}} & \widetilde{KO}(X^{8k}) \ar[r]^{\delta_{o}} \ar[d]_{\gamma_{X^{8k}}} & KO^{1}(X,X^{8k}) \ar[d]_{\gamma}\\
KO^{-1}(X,X^{8k}) \ar[r]^{j_{o}^{\ast}} & KO^{-1}(X) \ar[r]^{i_{o}^{\ast}} & KO^{-1}(X^{8k}) \ar[r]^{\delta_{o}} & KO(X,X^{8k})
}
\end{split}
\end{equation}
where the horizontal sequence is the long exact sequence of $K$ and $KO$ groups, the vertical sequence is the Bott exact sequence \eqref{bsr}.

\begin{lemma}\label{lemma:injec}
The homomorphism $\tilde{r}\colon K^{1}(X,X^{8k})\rightarrow KO^{1}(X,X^{8k})$ is injective.
\end{lemma}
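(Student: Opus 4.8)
The plan is to compare the Atiyah--Hirzebruch spectral sequences of $\widetilde{K}$ and $\widetilde{KO}$ for the pair $(X,X^{8k})$, using that the realification $\tilde r$ is a natural transformation and hence induces a map of these spectral sequences. The key geometric input is that the relative $CW$-structure of $(X,X^{8k})$ has cells only in dimensions $8k+1$ and $8k+2$ (since $\dim X=8k+i$ with $i\le2$), so that $H^{p}(X,X^{8k};-)$ vanishes unless $p\in\{8k+1,8k+2\}$. Consequently every differential $d_{r}$ with $r\ge2$ of either spectral sequence vanishes for dimension reasons: if $E_{r}^{p,q}\to E_{r}^{p+r,q-r+1}$ were nonzero we would need $p\in\{8k+1,8k+2\}$ and $p+r\in\{8k+1,8k+2\}$, impossible for $r\ge2$. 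Thus both spectral sequences collapse at $E_{2}$.

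First I would identify $K^{1}(X,X^{8k})$. In the complex spectral sequence the only term of total degree $1$ is $E_{2}^{8k+1,-8k}=H^{8k+1}(X,X^{8k};\Z)$, because the column $p=8k+2$ would require the odd coefficient group $KU^{-8k-1}=0$. Hence $K^{1}(X,X^{8k})\cong H^{8k+1}(X,X^{8k};\Z)$, and this group sits entirely in filtration degree $8k+1$, i.e.\ its $F^{8k+2}$ vanishes. Moreover it is torsion-free: the relative complex has no cells below dimension $8k+1$, so $H_{8k}(X,X^{8k})=0$, and the universal coefficient theorem gives $H^{8k+1}(X,X^{8k};\Z)\cong\mathrm{Hom}(H_{8k+1}(X,X^{8k}),\Z)$, which has no torsion.

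Next I would track $\tilde r$ through the spectral sequences. On $E_{2}$-terms $\tilde r$ is induced by realification of coefficients, and on the relevant spot $E_{\infty}^{8k+1,-8k}$ it is $H^{8k+1}(X,X^{8k};\,r_{*})$, where $r_{*}\colon KU^{-8k}=\Z\to KO^{-8k}=\Z$ is realification on $\pi_{8k}$. In degrees $\equiv0\bmod8$ this map is multiplication by $2$ (cf.\ Bott \cite{bottlk}); equivalently, $\tilde c\tilde r=1+(\text{conjugation})$ acts as multiplication by $2$ here, since conjugation acts on $\pi_{8k}(KU)$ by $(-1)^{4k}=1$. Because $K^{1}(X,X^{8k})$ lies entirely in filtration $8k+1$, the composite $K^{1}(X,X^{8k})\xrightarrow{\tilde r}KO^{1}(X,X^{8k})\twoheadrightarrow KO^{1}(X,X^{8k})/F^{8k+2}=H^{8k+1}(X,X^{8k};\Z)$ equals multiplication by $2$. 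If $\tilde r(x)=0$, then $2x=0$ in the torsion-free group $H^{8k+1}(X,X^{8k};\Z)$, so $x=0$, which gives injectivity.

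The main obstacle is pinning down the two quantitative facts that drive the argument: that realification $\pi_{8k}(KU)\to\pi_{8k}(KO)$ is multiplication by $2$ rather than an isomorphism (the reverse of what happens in degrees $\equiv4\bmod8$), and that the target $H^{8k+1}(X,X^{8k};\Z)$ is torsion-free so that multiplication by $2$ is injective. Everything else, namely the collapse of the two spectral sequences and the naturality of $\tilde r$, is formal. I would also record the easy case $i=1$, where the $8k+2$ column is absent and $KO^{1}(X,X^{8k})\cong H^{8k+1}(X,X^{8k};\Z)$ directly, as well as the base case $k=0$.
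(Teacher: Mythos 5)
Your proof is correct and rests on exactly the two facts that drive the paper's argument: the relative theory of $(X,X^{8k})$ is concentrated on the $(8k+1)$- and $(8k+2)$-cells, and realification $K^{-8k}(\text{pt})\rightarrow KO^{-8k}(\text{pt})$ is multiplication by $2$ on a torsion-free group; the paper packages this as the exact ladder of the triple $(X,X^{8k+1},X^{8k})$ (using $K^{1}(X,X^{8k+1})=0$ and injectivity of $\tilde{r}$ on $K^{1}(X^{8k+1},X^{8k})$), whereas you package it as a degenerate Atiyah--Hirzebruch comparison, which for a two-column relative complex amounts to the same computation. Your version has the minor merit of actually justifying the injectivity on the $(8k+1)$-cells (multiplication by $2$ into a torsion-free group), a point the paper asserts without proof.
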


\begin{proof}
Denote by $i^{\prime}\colon (X^{8k+1}, X^{8k})\rightarrow (X, X^{8k})$, $j^{\prime}\colon (X, X^{8k})\rightarrow (X, X^{8k+1})$ the inclusions. Then by the naturality of the long exact sequence of $K$ theory, we got the following exact ladder

\begin{equation*}
\begin{split}
\xymatrix{
K^{1}(X,X^{8k+1}) \ar[r]^{j_{u}^{\prime\ast }} \ar[d]_{\tilde{r}} & K^{1}(X,X^{8k}) \ar[r]^{i_{u}^{\prime\ast}} \ar[d]_{\tilde{r}} & K^{1}(X^{8k+1},X^{8k})  \ar[d]_{\tilde{r}}  \\
KO^{1}(X,X^{8k+1}) \ar[r]^{j_{o}^{\prime\ast}}  & KO^{1}(X, X^{8k}) \ar[r]^{i_{o}^{\prime\ast}}  & KO^{1}(X^{8k+1},X^{8k}).
}
\end{split}
\end{equation*}
Therefore, this lemma can be deduced easily from the facts that $K^{1}(X,X^{8k+1})=0$ and $\tilde{r}\colon K^{1}(X^{8k+1},X^{8k}) \rightarrow  KO^{1}(X^{8k+1},X^{8k})$ is injective.
\end{proof}

\begin{lemma}\label{lemma:sur}
If the Steenrod square 
$\mathrm{Sq}^{2}\colon H^{8k-1}(X;\Z/2)\rightarrow H^{8k+1}(X;\Z/2)$ is surjective, we must have $$\mathrm{Im}~j_{o}^{\ast}\subseteq \mathrm{Im}~\tilde{r}_{X}.$$ 
\end{lemma}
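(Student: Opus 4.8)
The plan is to reduce the claimed inclusion to a vanishing statement and then read that vanishing off the Atiyah--Hirzebruch spectral sequence. First, exactness of the Bott sequence \eqref{bsr} at $\widetilde{KO}(X)$ gives $\mathrm{Im}\,\tilde{r}_X = \mathrm{Ker}\,\gamma_X$, so it suffices to prove that $\gamma_X\circ j_o^\ast = 0$ as a map $\widetilde{KO}(X,X^{8k})\to KO^{-1}(X)$. The lower-left square of the diagram \eqref{sec3} commutes, hence $\gamma_X\circ j_o^\ast = j_o^\ast\circ\gamma$, where now $j_o^\ast\colon KO^{-1}(X,X^{8k})\to KO^{-1}(X)$ is the map occurring in the third (bottom) row. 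Thus it is enough to show that this third-row $j_o^\ast$ is the zero homomorphism.

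The crux is this vanishing of $j_o^\ast$ in the third row, which I would obtain from the filtration underlying the Atiyah--Hirzebruch spectral sequence of $KO^\ast(X)$. By exactness of the third row, $\mathrm{Im}\,j_o^\ast = \mathrm{Ker}\,[i_o^\ast\colon KO^{-1}(X)\to KO^{-1}(X^{8k})] = F^{8k+1,-8k-2}$, the $(8k+1)$-st filtration step of $KO^{-1}(X)$. Since $X$ has dimension $8k+i$ with $i\le2$, the restriction $KO^{-1}(X)\to KO^{-1}(X^{p-1})$ is an isomorphism for $p\ge 8k+3$, so $F^{p,-1-p}=0$ there, and $F^{8k+1,-8k-2}$ is assembled from the two associated graded pieces $E_\infty^{8k+1,-8k-2}$ and (only when $i=2$) $E_\infty^{8k+2,-8k-3}$. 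Hence it suffices to show that both of these $E_\infty$-terms vanish.

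For the term $E_\infty^{8k+2,-8k-3}$ I would note that $E_2^{8k+2,-8k-3}=H^{8k+2}(X;KO^{-3})=0$, because $KO^{-3}=0$, so it contributes nothing. For the term $E_\infty^{8k+1,-8k-2}$, begin with $E_2^{8k+1,-8k-2}=H^{8k+1}(X;KO^{-2})=H^{8k+1}(X;\Z/2)$. The outgoing $d_2$ lands in $H^{8k+3}(X;-)=0$ by the dimension bound, so only the incoming $d_2$ matters. Tracking degrees modulo $8$, the source is $E_2^{8k-1,-8k-1}=H^{8k-1}(X;KO^{-1})=H^{8k-1}(X;\Z/2)$ with $-8k-1\equiv-1\pmod 8$, so by \eqref{d2} the differential $d_2\colon E_2^{8k-1,-8k-1}\to E_2^{8k+1,-8k-2}$ is exactly $\mathrm{Sq}^2\colon H^{8k-1}(X;\Z/2)\to H^{8k+1}(X;\Z/2)$. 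Under hypothesis \eqref{sts} this $\mathrm{Sq}^2$ is surjective, so $E_3^{8k+1,-8k-2}=\mathrm{coker}(d_2)=0$, whence $E_\infty^{8k+1,-8k-2}=0$. Combining, $F^{8k+1,-8k-2}=0$, so the third-row $j_o^\ast$ vanishes and the lemma follows.

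The main obstacle I anticipate is the careful bookkeeping: identifying the filtration step that contains $\mathrm{Im}\,j_o^\ast$, matching the indexing convention of $F^{p,q}$ with the dimension bound $8k+i$, and, most importantly, verifying that the incoming $d_2$ into $E_2^{8k+1,-8k-2}$ really is the operation $\mathrm{Sq}^2$ appearing in \eqref{sts} rather than $\mathrm{Sq}^2\rho_2$ (which occurs one diagonal over, for $q\equiv0\bmod 8$). The reduction via the Bott sequence and the commuting square is formal; it is the spectral-sequence degree count where the hypothesis \eqref{sts} is actually consumed.
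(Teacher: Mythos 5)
Your proposal is correct and follows essentially the same route as the paper: identify $\mathrm{Im}\,j_o^\ast$ with $F^{8k+1,-8k-2}=\mathrm{Ker}[i_o^\ast\colon KO^{-1}(X)\to KO^{-1}(X^{8k})]$, kill the deeper filtration step (the paper does this via $KO^{-1}(X,X^{8k+1})=0$, you via $KO^{-3}=0$ at the $E_2$-level --- the same computation), and then use that the incoming $d_2$ on the $q\equiv-1\bmod 8$ row is $\mathrm{Sq}^2\colon H^{8k-1}(X;\Z/2)\to H^{8k+1}(X;\Z/2)$, whose surjectivity forces $E_\infty^{8k+1,-8k-2}=0$, before concluding with $\gamma_X\circ j_o^\ast=j_o^\ast\circ\gamma=0$ and exactness of the Bott sequence. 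Your sign conventions on the filtration indices ($F^{8k+1,-8k-2}$ rather than the paper's $F^{8k+1,8k-2}$) are in fact the consistent ones given the paper's definition of $F^{p,q}$.
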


\begin{proof}
In the Atiyah-Hirzebruch spectral sequence, 
since $KO^{-1}(X,X^{8k+1})=0$, it follows that 
$$F^{8k+2,8k-3}=\mathrm{Ker}[i_{o}^{\ast}\colon KO^{-1}(X)\rightarrow KO^{-1}(X^{8k+1})]=0.$$
Hence 
$$E_{\infty}^{8k+1,8k-2}=F^{8k+1,8k-2}/F^{8k+2,8k-3}=F^{8k+1,8k-2}.$$
Therefore, 
by the equation \eqref{d2}, the surjectivity of the Steenrod square 
$$\mathrm{Sq}^{2}\colon H^{8k-1}(X;\Z/2)\rightarrow H^{8k+1}(X;\Z/2)$$
 implies that 
$$F^{8k+1,8k-2}=E_{\infty}^{8k+1,8k-2}=0.$$
That is, the homomorphism $i_{o}^{\ast}\colon KO^{-1}(X)\rightarrow KO^{-1}(X^{8k})$ is injective.
Then it follows from the exactness of the diagram \eqref{sec3} that the homomorphism
$$j_{o}^{\ast}\colon KO^{-1}(X,X^{8k})\rightarrow KO^{-1}(X)$$
 is a zero homomorphism.
So the composition homomorphism 
$$\gamma_{X}\circ j_{o}^{\ast}=j_{o}^{\ast}\circ \gamma=0,$$
and we get that
$$\mathrm{Im}~j_{o}^{\ast}\subseteq \mathrm{Im}~\tilde{r}_{X}.$$
\end{proof}

\begin{proof}[Proof of Theorem $1$]
Obviously, $\xi$ admits a stable complex structure over $X$ implies that $\xi$ admits a stable complex structure over $X^{8k}$.

Conversely, suppose that $\xi$ admits a stable complex structure over $X^{8k}$. That is there exists a stable complex vector bundle $\tilde{\eta}^{\prime}\in \widetilde{K}(X^{8k})$ such that 
$$\tilde{r}_{X^{8k}}(\tilde{\eta}^{\prime})=i_{o}^{\ast}(\tilde{\xi}).$$
Then it follows from the exactness of the diagram \eqref{sec3} and the Lemma \ref{lemma:injec} that there is a stable complex vector bundle $\tilde{\eta}_{1}$ such that $i_{u}^{\ast}(\tilde{\eta}_{1})=\tilde{\eta}^{\prime}$ and
$$i_{o}^{\ast}(\tilde{r}_{X}(\tilde{\eta}_{1})-\tilde{\xi})=0.$$
That is, 
$$\tilde{r}_{X}(\tilde{\eta}_{1})-\tilde{\xi}\in \mathrm{Im}j_{o}^{\ast}.$$
Therefore, by the Lemma \ref{lemma:sur}, the surjectivity of the Steenrod Square 
$$\mathrm{Sq}^{2}\colon H^{8k-1}(X;\Z/2)\rightarrow H^{8k+1}(X;\Z/2)$$
implies that $\xi$ admits a stable complex structure over $X$.

\end{proof}

\section{The obstruction for an extension of a  vector bundle over $X^{2q-1}$ to $X^{2q+1}$}
Let $X$ be a pathwise connected $CW$-complex. In order to prove Theorem $2$, in this section, we will investigate the obstruction for an extension of a complex vector bundle over the $(2q-1)$-skeleton $X^{2q-1}$ of $X$ to the $(2q+1)$-skeleton $X^{2q+1}$.

\begin{theorem}
Let  $X$ be a pathwise connected $CW$-complex and $\tilde{\eta}\in \widetilde{K}(X^{2q-1})$ a stable complex vector bundle over $X^{2q-1}$. Denote by $\vartheta_{2q+1}(\tilde{\eta})\in H^{2q+1}(X;\Z)$ the obstruction to extend $\tilde{\eta}$ to $X^{2q+1}$. Then 
$$(q-1)!\cdot \vartheta_{2q+1}(\tilde{\eta})=0.$$

\end{theorem}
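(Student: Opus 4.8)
The plan is to run classical obstruction theory for maps into the classifying space $BU$, and to pin down the integer coefficient of the obstruction cocycle by evaluating the top Chern class on the universal generator over a sphere. A stable complex vector bundle is a homotopy class of maps into $BU$, and by Bott periodicity $\pi_{2q-1}(BU)\cong\pi_{2q-2}(U)=0$ while $\pi_{2q}(BU)\cong\pi_{2q-1}(U)\cong\mathbb{Z}$. Hence $\tilde{\eta}$, given over $X^{2q-1}$, extends over $X^{2q}$ (the obstruction lies in $H^{2q}(X;\pi_{2q-1}(BU))=0$), and the extension $\tilde{\eta}_{0}$ is unique up to homotopy relative to $X^{2q-1}$; a standard difference-cochain argument shows that the resulting obstruction class in $H^{2q+1}(X;\pi_{2q}(BU))=H^{2q+1}(X;\mathbb{Z})$ is independent of the chosen extension, so $\vartheta_{2q+1}(\tilde{\eta})$ is well defined.

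Next I would identify the obstruction cocycle explicitly. Fix a cellular extension $\tilde{\eta}_{0}$ over $X^{2q}$. For each oriented $(2q+1)$-cell $e$ of $X$ with attaching map $\phi_{e}\colon S^{2q}\to X^{2q}$, the cellular obstruction cocycle $c^{2q+1}\in C^{2q+1}(X;\mathbb{Z})$ assigns to $e$ the class $[\phi_{e}^{\ast}\tilde{\eta}_{0}]\in\pi_{2q}(BU)\cong\widetilde{K}(S^{2q})\cong\mathbb{Z}$. The crucial input is the value of the top Chern class on $S^{2q}$: Newton's identity gives $\mathrm{ch}_{q}=\tfrac{(-1)^{q-1}}{(q-1)!}c_{q}+(\text{decomposables})$, and since $H^{\ast}(S^{2q})$ carries no nonzero decomposable class in degree $2q$, a generator $\omega$ of $\widetilde{K}(S^{2q})$ satisfies $c_{q}(\omega)=\pm(q-1)!$ times the fundamental class. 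Consequently $\langle c_{q}(\phi_{e}^{\ast}\tilde{\eta}_{0}),[S^{2q}]\rangle=\pm(q-1)!\,c^{2q+1}(e)$ for every $(2q+1)$-cell $e$.

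Finally I would compare this with the coboundary of a Chern-class representative. Choose a cellular cocycle $z\in C^{2q}(X;\mathbb{Z})$ representing $c_{q}(\tilde{\eta}_{0})\in H^{2q}(X^{2q};\mathbb{Z})$; since the $2q$-cells of $X^{2q}$ and of $X$ coincide, $z$ is also a $2q$-cochain of $X$ and its cellular coboundary $\delta z\in C^{2q+1}(X;\mathbb{Z})$ is defined. Evaluating on a cell $e$ yields $\langle\delta z,e\rangle=\langle z,\partial e\rangle=\langle\phi_{e}^{\ast}c_{q}(\tilde{\eta}_{0}),[S^{2q}]\rangle=\pm(q-1)!\,c^{2q+1}(e)$, so that $(q-1)!\,c^{2q+1}=\pm\,\delta z$ as integral cochains on $X$. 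Passing to cohomology gives $(q-1)!\,\vartheta_{2q+1}(\tilde{\eta})=\pm[\delta z]=0$ in $H^{2q+1}(X;\mathbb{Z})$, which is the assertion.

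I expect the main obstacle to be the careful bookkeeping in the last two steps: correctly identifying the cellular obstruction cocycle with the $K$-theory class of the restricted bundle $\phi_{e}^{\ast}\tilde{\eta}_{0}$, and matching it to the coboundary of $z$ through the universal sphere computation, with consistent orientation and sign conventions. The homotopy-theoretic ingredients—Bott periodicity and the identity $c_{q}(\omega)=\pm(q-1)!$—are classical, so the real content is organizing the cochain-level equality $(q-1)!\,c^{2q+1}=\pm\,\delta z$ that forces the order of $\vartheta_{2q+1}(\tilde{\eta})$ to divide $(q-1)!$.
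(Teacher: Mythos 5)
Your argument is correct and is essentially the paper's own proof in a different dress: both hinge on Bott integrality on spheres ($c_q$ of a generator of $\widetilde{K}(S^{2q})$ is $\pm(q-1)!$ times the orientation class, equivalently $\mathrm{ch}_q=c_q/(q-1)!$ there) and both exhibit $(q-1)!$ times the obstruction cocycle as the cellular coboundary of a cochain representing $c_{q}(\tilde{\eta}_{0})\in H^{2q}(X^{2q};\Z)$, the paper packaging this via the relative Chern character isomorphisms and the Atiyah--Hirzebruch identification of $\Sigma\circ ch\circ\Delta_{2q}\circ ch^{-1}$ with $d_{2q}$ where you use classical obstruction theory for maps into $BU$. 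The only quibble is your parenthetical claim that the extension over $X^{2q}$ is unique up to homotopy rel $X^{2q-1}$ (false in general, since difference cochains in $C^{2q}(X;\Z)$ need not vanish), but this is harmless because the difference-cochain argument you invoke immediately afterward is the correct and sufficient justification that the obstruction class is well defined.
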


\begin{proof} . Denote by 
$i\colon X^{2q-1}\rightarrow X^{2q}$ and $j\colon X^{2q}\rightarrow (X^{2q}, X^{2q-1})$ 
the inclusions. Let
$f\colon \coprod S^{2q}\rightarrow X^{2q}$ be the attaching map such that  
$$X^{2q+1}= X^{2q}\cup_{f}\coprod \D^{2q+1},$$
where the symbol $\coprod$ means the disjoint union.
Then it follows from $\widetilde{K}(S^{2q-1})\cong0$ that we have the following diagram
\begin{equation*}
\begin{split}
\xymatrix{
 & \widetilde{K}(X^{2q+1})  \ar[d] &  &  \\
\widetilde{K}(X^{2q},X^{2q-1}) \ar[r]^{j_{u}^{\ast}} \ar[rd]_{\Delta_{2q}} & \widetilde{K}(X^{2q}) \ar[r]^{i_{u}^{\ast}} \ar[d]_{f_{u}^{\ast}} & \widetilde{K}(X^{2q-1}) \ar[r]  & 0\\
&\widetilde{K}(\coprod S^{2q}) &  & 
}
\end{split}
\end{equation*}
where $\Delta_{2q}=f_{u}^{\ast} \circ j_{u}^{\ast}$ and the horizontal and vertical sequences are the long exact sequences of $K$ groups for the pairs $(X^{2q}, X^{2q-1})$ and $(X^{2q+1}, X^{2q})$ respectively. 
Therefore, for any $\tilde{\eta}\in \widetilde{K}(X^{2q-1})$, there must exists a stable complex vector bundle $\tilde{\eta}^{\prime}\in \widetilde{K}(X^{2q})$ such that $i_{u}^{\ast}(\tilde{\eta}^{\prime})=\tilde{\eta}$, and $\tilde{\eta}$ can be extended to $X^{2q+1}$ if and only if 
\begin{equation}\label{eq:obs}
f_{u}^{\ast}(\tilde{\eta}^{\prime})\in \mathrm{Im}~\Delta_{2q}.
\end{equation}

Denote by 
$$\Sigma\colon H^{2q}(\coprod S^{2q};\Z)\rightarrow H^{2q+1}(X^{2q+1},X^{2q};\Z)$$
the suspension which is a isomorphism,  
$$f^{*}\colon H^{2q}(X^{2q};\Z)\rightarrow H^{2q}(\coprod S^{2q};\Z)$$
 and 
 $$ j^{\ast}\colon H^{2q}(X^{2q},X^{2q-1};\Z)\rightarrow H^{2q}(X^{2q};\Z)$$
the homomorphisms induced by the maps $f$ and $j$ respectively.
It is known that the Chern characters 
\begin{align*}
&ch\colon \widetilde{K}(X^{2q}, X^{2q-1})\rightarrow H^{2q}(X^{2q},X^{2q-1};\Z),\\
&ch\colon \widetilde{K}(\coprod S^{2q})\rightarrow H^{2q}(\coprod S^{2q};\Z)
\end{align*}
are all isomorphisms, and the composition homomorphism 
\begin{equation*}
\Sigma\circ ch\circ \Delta_{2q}\circ ch^{-1}\colon H^{2q}(X^{2q}, X^{2q-1};\Z)\rightarrow H^{2q+1}(X^{2q+1}, X^{2q};\Z)
\end{equation*}
is just the cellular coboudary homomorphism
\begin{equation*}
d_{2q}=\Sigma\circ f^{\ast}\circ j^{\ast}\colon H^{2q}(X^{2q}, X^{2q-1};\Z)\rightarrow H^{2q+1}(X^{2q+1}, X^{2q};\Z)
\end{equation*}
of the cellular cochain complex of $X$ (cf. Atiyah and Hirzebruch \cite[pp. 16-18]{AtHi61}). 


Hence by the equation \eqref{eq:obs}, the obstruction 
$$\vartheta_{2q+1}(\tilde{\eta})\in H^{2q+1}(X;\Z)$$ 
is just the cohomology class represented by 
$$\Sigma \circ ch(f_{u}^{\ast}(\tilde{\eta}^{\prime}))=\Sigma ~(\frac{c_{q}(f_{u}^{\ast}(\tilde{\eta}^{\prime}))}{(q-1)!}).$$
Therefore, it follows from the surjectivity of $j^{\ast}$ and $d_{2q}=\Sigma\circ f^{\ast}\circ j^{\ast}$ that 
$$(q-1)!\cdot \mathfrak{o}(\tilde{\eta})=0.$$
\end{proof}

\begin{corollary}\label{coro:extend}
Let $X$ be a $CW$-complex. Suppose that $H^{2q+1}(X;\Z)$ contains no $(q-1)!$-torsion. Then every stable complex vector bundle over $X^{2q-1}$ can be extended to a complex vector bundle over $X^{2q+2}$. 
\end{corollary}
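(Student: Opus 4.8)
The plan is to deduce the corollary from the preceding Theorem by crossing the two skeletal dimensions $2q-1 \rightsquigarrow 2q+1 \rightsquigarrow 2q+2$ one stage at a time, using the torsion bound just established for the first jump and an elementary homotopy-theoretic vanishing for the second. Throughout I regard a stable complex vector bundle over a finite-dimensional complex $Y$ as an element of $\widetilde{K}(Y)\cong[Y,BU]$, so that ``extending a stable class over the $m$-skeleton'' means extending the corresponding map into $BU$ over the next batch of cells.

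First I would handle the passage from $X^{2q-1}$ to $X^{2q+1}$. Given $\tilde{\eta}\in\widetilde{K}(X^{2q-1})$, the Theorem identifies the obstruction to extending $\tilde{\eta}$ over $X^{2q+1}$ as a class $\vartheta_{2q+1}(\tilde{\eta})\in H^{2q+1}(X;\Z)$ satisfying $(q-1)!\cdot\vartheta_{2q+1}(\tilde{\eta})=0$; that is, $\vartheta_{2q+1}(\tilde{\eta})$ is $(q-1)!$-torsion. By hypothesis $H^{2q+1}(X;\Z)$ contains no $(q-1)!$-torsion, so $\vartheta_{2q+1}(\tilde{\eta})=0$ and $\tilde{\eta}$ extends to a stable complex vector bundle over $X^{2q+1}$.

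Second I would extend one further step, from $X^{2q+1}$ to $X^{2q+2}$, which I claim is automatic. Viewing the extended class as a map $X^{2q+1}\to BU$, the obstruction to extending it over the $(2q+2)$-cells lies in $H^{2q+2}(X;\pi_{2q+1}(BU))$. By Bott periodicity $\pi_{n}(BU)\cong\pi_{n-1}(U)$ vanishes in odd degrees $n$, so $\pi_{2q+1}(BU)=0$ and the obstruction group is zero. Hence the extension to $X^{2q+2}$ always exists, and since $X^{2q+2}$ is finite-dimensional the resulting stable class is represented by a genuine complex vector bundle. Combining the two stages proves the corollary.

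I do not expect a genuine obstacle here: the whole content sits in the previously proved Theorem, and the second stage is formal once one records that $BU$ has no odd homotopy. The only point needing a line of care is compatibility of the two viewpoints at dimension $2q+1$ — the $K$-theoretic obstruction of the Theorem versus the map-into-$BU$ obstruction of the second stage — but this is immediate, since a stable class over the $(2q+1)$-skeleton is already literally an element of $[X^{2q+1},BU]$, so no translation between obstruction theories is required beyond the identification $\widetilde{K}(Y)\cong[Y,BU]$.
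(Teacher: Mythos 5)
Your proof is correct and takes essentially the same route as the paper: the first stage is exactly the Theorem combined with the no-$(q-1)!$-torsion hypothesis, and the second stage is the paper's one-line observation that $i_{u}^{\ast}\colon \widetilde{K}(X^{2q+2})\rightarrow \widetilde{K}(X^{2q+1})$ is surjective, which holds precisely because $\pi_{2q+1}(BU)=0$ as you note.
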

\begin{proof}
Note that the homomorphism $i_{u}^{\ast}\colon \widetilde{K}(X^{2q+2})\rightarrow \widetilde{K}(X^{2q+1})$ is surjective.
\end{proof}

Similarly we can get that 
\begin{theorem}
Let  $X$ be a pathwise connected $CW$-complex and $\tilde{\xi}\in \widetilde{KO}(X^{4q-1})$ (resp. $\tilde{\gamma}\in \widetilde{KSp}(X^{4q-1})$) a stable real (resp. symplectic ) vector bundle over $X^{4q-1}$. Denote by $\vartheta_{4q+1}(\tilde{\xi})\in H^{4q+1}(X;\Z)$ (resp. $\vartheta_{4q+1}(\tilde{\gamma})\in H^{4q+1}(X;\Z)$) the obstruction to extend $\tilde{\xi}$ (resp. $\tilde{\gamma}$) to $X^{4q+1}$. Then 
\begin{align*}
(2q-1)!\cdot a_{q}\cdot \vartheta_{4q+1}(\tilde{\xi})&=0,\\
(2q-1)!\cdot b_{q}\cdot \vartheta_{4q+1}(\tilde{\xi})&=0,
\end{align*}
where $a_{q}\cdot b_{q}=2$ and $a_{q}=1$ for $q$ even and $a_{q}=2$ for $q$ odd.

\end{theorem}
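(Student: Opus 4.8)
The statement is flagged as following ``similarly'', so the plan is to transcribe the proof of the previous theorem verbatim, replacing $\widetilde{K}$ by $\widetilde{KO}$ (resp. $\widetilde{KSp}$) and the cells $S^{2q}$ by $S^{4q}$, and inserting the complexification factors $a_q,b_q$ at the one place where the Chern character ceases to be an integral isomorphism. First I would reproduce the two-exact-sequence diagram for the attaching map $f\colon\coprod S^{4q}\to X^{4q}$ with $X^{4q+1}=X^{4q}\cup_{f}\coprod\D^{4q+1}$, now in $KO$-theory. The inputs from Bott are $\widetilde{KO}(S^{4q-1})=0$ and $\widetilde{KSp}(S^{4q-1})=0$ (both hold since $4q-1\equiv 3$ or $7\bmod 8$, and $\widetilde{KSp}(S^{4q-1})\cong\widetilde{KO}(S^{4q+3})$), which give the surjectivity of $i_{o}^{\ast}$ and hence a lift $\tilde{\xi}^{\prime}\in\widetilde{KO}(X^{4q})$ of $\tilde{\xi}$, with $\tilde{\xi}$ extending to $X^{4q+1}$ if and only if $f_{o}^{\ast}(\tilde{\xi}^{\prime})\in\mathrm{Im}~\Delta_{4q}$, where $\Delta_{4q}=f_{o}^{\ast}\circ j_{o}^{\ast}$.

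The new ingredient is that one must use the Pontrjagin character $ph=ch\circ\tilde{c}$ in place of $ch$, and $ph$ is \emph{not} an integral isomorphism on $\widetilde{KO}(\coprod S^{4q})$. Here I would invoke Bott's computation that $\widetilde{KO}(S^{4q})\cong\Z$ and that the complexification $\tilde{c}\colon\widetilde{KO}(S^{4q})\to\widetilde{K}(S^{4q})$ is multiplication by $a_q$ (with $a_q=1$ for $q$ even and $a_q=2$ for $q$ odd); consequently $ph$ equals $a_q$ times the natural generator-to-generator isomorphism $\widetilde{KO}(\coprod S^{4q})\xrightarrow{\cong}H^{4q}(\coprod S^{4q};\Z)$. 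Combined with the fact that on a sphere the degree-$4q$ component of $ph$ is, up to sign, $p_q/(2q-1)!$ (obtained from $ch_{2q}(\tilde{c}\,\xi)=\pm c_{2q}(\tilde{c}\,\xi)/(2q-1)!$ together with $c_{2q}(\tilde{c}\,\xi)=\pm p_q(\xi)$, all lower classes vanishing on $S^{4q}$), this identifies $\vartheta_{4q+1}(\tilde{\xi})$ with the class represented by $\pm\,\Sigma\!\big(p_q(f_{o}^{\ast}\tilde{\xi}^{\prime})/(a_q(2q-1)!)\big)$, exactly as the previous proof identified $\vartheta_{2q+1}(\tilde{\eta})$ with $\Sigma\big(c_q(f_{u}^{\ast}\tilde{\eta}^{\prime})/(q-1)!\big)$. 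The identification $\Sigma\circ ph\circ\Delta_{4q}\circ ph^{-1}=d_{4q}$ with the cellular coboundary carries over unchanged, as it is formal for any multiplicative theory (Atiyah--Hirzebruch).

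Finally I would clear denominators: since $p_q(f_{o}^{\ast}\tilde{\xi}^{\prime})=f^{\ast}p_q(\tilde{\xi}^{\prime})$ and $j^{\ast}\colon H^{4q}(X^{4q},X^{4q-1};\Z)\to H^{4q}(X^{4q};\Z)$ is surjective (because $H^{4q}(X^{4q-1};\Z)=0$), we may write $p_q(\tilde{\xi}^{\prime})=j^{\ast}u$, whence $a_q(2q-1)!\cdot\vartheta_{4q+1}(\tilde{\xi})=\pm[\Sigma f^{\ast}j^{\ast}u]=\pm[d_{4q}(u)]=0$ as a coboundary, giving the first congruence. The symplectic case is word-for-word the same, using $\widetilde{KSp}(S^{4q})\cong\Z$ and the fact that the corresponding complexification factor is $b_q$ (with $a_q b_q=2$, reflecting $\tilde{r}\circ\tilde{c}=2$), yielding $b_q(2q-1)!\cdot\vartheta_{4q+1}(\tilde{\gamma})=0$. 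The main obstacle is purely one of normalization: pinning down the constants $a_q,b_q$ via Bott's complexification computation on $S^{4q}$ and matching the degree-$4q$ component of the Pontrjagin character with the top Pontrjagin class $p_q$ up to the factor $1/(2q-1)!$; once these constants are fixed, the torsion argument is identical to the complex case.
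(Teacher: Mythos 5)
Your proof is correct and is exactly the argument the paper intends: the paper offers no proof of this theorem beyond the word ``Similarly,'' and your transcription of the complex-case proof --- with the Pontrjagin character $ph=ch\circ\tilde{c}$ in place of $ch$, the vanishing of $\widetilde{KO}(S^{4q-1})$ and $\widetilde{KSp}(S^{4q-1})$ in place of $\widetilde{K}(S^{2q-1})=0$, and Bott's constants $a_{q},b_{q}$ accounting for the failure of $ph$ to be generator-to-generator on $\widetilde{KO}(S^{4q})$ and $\widetilde{KSp}(S^{4q})$ --- supplies precisely the omitted details, including the correct normalization $ph_{2q}=\pm p_{q}/(2q-1)!$ on spheres. (Note the paper's second displayed equation should read $\vartheta_{4q+1}(\tilde{\gamma})$; you interpreted it correctly as the symplectic case.)
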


\section{The proof of Theorem $2$}

In this section,  we will take Dessai's strategy, which was used to prove \cite[Theorem 1.2]{dessai}, to prove the Theorem $2$.

Recall that  $M$ is a $10$-dimensional closed oriented smooth manifold with no $2$-torsion in $H_{i}(M;\Z), i=1,2,3$ and no $3$-torsion in $H_{1}(M;\Z)$. It also satisfying that the Steenrod square 
$$\mathrm{Sq}^{2}\colon H^{7}(M;\Z/2)\rightarrow H^{9}(M;\Z/2)$$ 
is surjective. Then $M$ is $spin^{c}$, and we have fixed an element $c\in H^{2}(M;\Z)$ satisfying $\rho_{2}(c)=w_{2}(M)$
and defined
$$\mathcal{D}(M)\triangleq\{x\in H^{2}(M;\Z)\mid x^{2}+cx=2z_{x} \text{~for some~}z_{x}\in H^{4}(M;\Z)\}.$$

Denote by $i_{u}^{\ast}\colon \widetilde{K}(M)\rightarrow \widetilde{K}(M^{7})$ and $j_{u}^{\ast}\colon \widetilde{K}(M)\rightarrow \widetilde{K}(M^{8})$ the homomorphisms induced by the inclusions $i\colon M^{7}\rightarrow M$ and $j\colon M^{8}\rightarrow M$ respectively.  Let  $p_{\ast}\colon H^{i}(M;\Z)\rightarrow H^{i}(M;\Q)$ be the homomorphism induced by the canonical inclusion $p\colon \Z\rightarrow \Q$. Recall that $\tilde{r}_{M^{q}}\colon \widetilde{K}(M^{q})\rightarrow \widetilde{KO}(M^{q})$ is the real reduction homomorphism. Then we get that

\begin{lemma}\label{lemma:f}
$M$ has the following properties:
\begin{enumerate}[(a)]
\item $\rho_{2}\colon H^{i}(M;\Z)\rightarrow H^{i}(M;\Z/2)$ is surjective for $i\neq4, 5$.
\item $\rho_{2}\circ p_{\ast}^{-1}$ is well defined on $p_{\ast}(H^{8}(M;\Z))$.
\item The annihilator of $\mathrm{Sq}^{2}\rho_{2}H^{6}(M;\Z)$ with respect to the cup-product is equal to $\rho_{2}(\mathcal{D}(M))$.
\item For any stable complex vector bundle $\tilde{\eta}^{\prime}\in \widetilde{K}(M^{7})$, there must exists a stable complex vector bundle $\tilde{\eta}\in \widetilde{K}(M)$ such that $i_{u}^{\ast}(\tilde{\eta})=\tilde{\eta}^{\prime}$. 
\item Let $\tilde{\xi}\in\widetilde{KO}(M)$ be a stable real vector bundle over $M$. Then there must exists a stable complex   vector bundle $\tilde{\eta}\in \widetilde{K}(M)$, such that $\tilde{r}_{M^{7}}i_{u}^{\ast}(\tilde{\eta})=i_{o}^{\ast}(\tilde{\xi})$. Moreover, if $\tilde{r}_{M^{8}}j_{u}^{\ast}(\tilde{\eta})=j_{o}^{\ast}(\tilde{\xi})$, $\tilde{\xi}$ must admits a stable complex structure. 
\end{enumerate}
\end{lemma}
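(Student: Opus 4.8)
The plan is to dispatch the five assertions in turn, using the universal coefficient theorem, Poincaré duality and the Bockstein sequence to establish the purely cohomological facts (a)--(b), then feeding these into the obstruction-theoretic statements (d)--(e), with the cup-product computation (c) as the technical core. For (a), from the coefficient sequence $0\to\Z\xrightarrow{2}\Z\to\Z/2\to0$ one sees that $\rho_2\colon H^i(M;\Z)\to H^i(M;\Z/2)$ is surjective iff multiplication by $2$ is injective on $H^{i+1}(M;\Z)$, i.e. iff $H^{i+1}(M;\Z)$ has no $2$-torsion, which by the universal coefficient theorem is iff $H_i(M;\Z)$ has no $2$-torsion. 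The hypotheses give this for $i=0,1,2,3$, and the torsion-linking isomorphism $\mathrm{Tor}\,H_k(M;\Z)\cong\mathrm{Tor}\,H_{9-k}(M;\Z)$ (Poincaré duality on the closed oriented $10$-manifold $M$) propagates it to $i=6,7,8,9$; together with $H_{10}\cong\Z$ this leaves only $i=4,5$ undecided, which is (a). For (b) the kernel of $p_*\colon H^8(M;\Z)\to H^8(M;\Q)$ is $\mathrm{Tor}\,H^8(M;\Z)\cong\mathrm{Tor}\,H_7(M;\Z)$, which has no $2$-torsion and hence consists of elements of odd order; since $H^8(M;\Z/2)$ is a $\Z/2$-vector space, $\rho_2$ annihilates odd torsion, so $\rho_2\circ p_*^{-1}$ is single-valued on $p_*(H^8(M;\Z))$.

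Assertion (c) is where the real work lies, and I expect it to be the main obstacle. First I would reinterpret membership in $\mathcal{D}(M)$ mod $2$: since $\ker\rho_2=2H^4(M;\Z)$, an element $x\in H^2(M;\Z)$ lies in $\mathcal{D}(M)$ iff $\rho_2(x)^2+w_2(M)\rho_2(x)=0$, and because $\rho_2$ is onto in degree $2$ (by (a)) this identifies $\rho_2(\mathcal{D}(M))$ with $\{u\in H^2(M;\Z/2)\mid u^2+w_2(M)u=0\}$. The key computation is then, for $u\in H^2(M;\Z/2)$ and $y\in H^6(M;\Z)$ with $w=\rho_2(y)$,
$$\langle u\smile\mathrm{Sq}^2 w,[M]\rangle=\langle(u^2+w_2(M)u)\smile w,[M]\rangle,$$
which I would obtain from the Cartan formula $\mathrm{Sq}^2(u\smile w)=u^2\smile w+\mathrm{Sq}^1u\smile\mathrm{Sq}^1w+u\smile\mathrm{Sq}^2w$ together with the Wu formula $\langle\mathrm{Sq}^2(u\smile w),[M]\rangle=\langle v_2\smile u\smile w,[M]\rangle$ and $v_2=w_2(M)$ for oriented $M$, using $\mathrm{Sq}^2u=u^2$ and $\mathrm{Sq}^1u=\mathrm{Sq}^1w=0$ (the latter because $u,w$ are mod-$2$ reductions and $\mathrm{Sq}^1=\rho_2\widetilde\beta$ with $\widetilde\beta\rho_2=0$). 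Given this identity, $u$ annihilates $\mathrm{Sq}^2\rho_2 H^6(M;\Z)$ iff $\langle(u^2+w_2(M)u)\smile w,[M]\rangle=0$ for all $w\in\rho_2 H^6(M;\Z)=H^6(M;\Z/2)$ (surjectivity of $\rho_2$ in degree $6$), and by nondegeneracy of the Poincaré pairing $H^4\times H^6\to H^{10}(M;\Z/2)\cong\Z/2$ this holds iff $u^2+w_2(M)u=0$, i.e. iff $u\in\rho_2(\mathcal{D}(M))$.

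Finally, (d) and (e) are obstruction-theoretic. For (d) I would invoke Corollary~\ref{coro:extend} with $q=4$: since $\mathrm{Tor}\,H^9(M;\Z)\cong\mathrm{Tor}\,H_8(M;\Z)\cong\mathrm{Tor}\,H_1(M;\Z)$ has neither $2$- nor $3$-torsion, $H^9(M;\Z)$ contains no $(q-1)!=6$-torsion, so every stable complex bundle over $M^7=M^{2q-1}$ extends over $M^{2q+2}=M^{10}=M$, which is exactly the surjectivity of $i_u^*$. For the first part of (e), the obstructions to a stable complex structure on $\tilde\xi$ over the $7$-skeleton are, by \eqref{eq:o3o7}, $\mathfrak{o}_3=\beta w_2(\xi)\in H^3(M;\Z)$ and $\mathfrak{o}_7=\beta w_6(\xi)\in H^7(M;\Z)$; both are images of the Bockstein, hence $2$-torsion, and they lie in groups whose torsion subgroups are $\mathrm{Tor}\,H_2(M;\Z)$ and $\mathrm{Tor}\,H_6(M;\Z)\cong\mathrm{Tor}\,H_3(M;\Z)$, which contain no $2$-torsion, so they vanish. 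Thus $i_o^*(\tilde\xi)$ is the real reduction of some $\tilde\eta^{\prime}\in\widetilde K(M^7)$, which I extend to $\tilde\eta\in\widetilde K(M)$ by (d); naturality of $\tilde r$ then gives $\tilde r_{M^7}i_u^*(\tilde\eta)=i_o^*(\tilde\xi)$. For the ``moreover'' clause, the added hypothesis $\tilde r_{M^8}j_u^*(\tilde\eta)=j_o^*(\tilde\xi)$ says precisely that $\tilde\xi$ admits a stable complex structure over $M^8=M^{8k}$ with $k=1$; since $\dim M=10=8\cdot1+2$ and $\mathrm{Sq}^2\colon H^7(M;\Z/2)\to H^9(M;\Z/2)$ is surjective, Theorem~1 upgrades this to a stable complex structure over all of $M$.
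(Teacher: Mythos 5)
Your proposal is correct and follows essentially the same route as the paper: universal coefficients, Poincar\'e duality and the Bockstein sequence for (a) and (b); the Cartan/Wu formula computation identifying the annihilator with $\{u\mid u^{2}+w_{2}(M)u=0\}=\rho_{2}(\mathcal{D}(M))$ for (c); Corollary \ref{coro:extend} with $q=4$ for (d); and the vanishing of $\mathfrak{o}_{3}=\beta w_{2}(\xi)$, $\mathfrak{o}_{7}=\beta w_{6}(\xi)$ combined with (d) and Theorem 1 for (e). You merely spell out the details (e.g.\ the torsion-linking isomorphisms and the nondegeneracy of the mod $2$ pairing) that the paper leaves implicit.
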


\begin{proof}
\begin{enumerate}[(a)]
\item Since $H_{i}(M;\Z)$ has no $2$-torsion for $i=0,1,2,3,10$, the same is true for  $H^{i}(M;\Z)$, $i\neq5,6$ ( universal coefficient theorem and Poinar\'{e} Duality ). Hence the statement is true by using the long exact Bockstein sequence.
\item That is because the kernel of $p_{\ast}\colon H^{8}(M;\Z)\rightarrow H^{8}(M;\Q)$ is an odd torsion which maps to zero under $\rho_{2}$.
\item  Let $y\in H^{2}(M;\Z/2)$. Note that the Wu class $V_{2}$ is $w_{2}(M)$. Then it follows from Cartan formula that for any $z\in H^{6}(M;\Z)$
\begin{equation*}
y\cdot \mathrm{Sq}^{2}\rho_{2}(z)=0\quad\quad \text{iff}\quad\quad (w_{2}(M)\cdot y+y^{2})\cdot\rho_{2}(z)=0.
\end{equation*}
Hence the statement is true by the statement $(a)$ and the definition of $\mathfrak{D}(M)$.
\item Since $H^{9}(M;\Z)\cong H_{1}(M;\Z)$ has no $2$-torsion and $3$-torsion, the statement can be deduced easily from Corollary \ref{coro:extend}. 
\item The statements can be proved by combining the identity \eqref{eq:o3o7} and the statement $(d)$ with the Theorem $1$.
\end{enumerate}
\end{proof}


Denote by $[M]$ the fundamental class of $M$, $\langle ~\cdot~,~\cdot~ \rangle$ the Kronecker product and 
$$\hat{\mathfrak{A}}(M)=1-\frac{p_{1}(M)}{24}+\frac{-4p_{2}(M)+7p_{1}^{2}(M)}{5760}$$
the $\mathfrak{A}$ class of $M$.
For any $x\in H^{2}(M;\Z)$, we will denote by $l_{x}$ the complex line bundle over $M$ with 
$$c_{1}(l_{x})=x.$$
For any $x\in \mathfrak{D}(M)$, we may choose a class $v_{x}\in H^{6}(M;\Z)$ such that 
$$\rho_{2}(v_{x})=\mathrm{Sq}^{2}\rho_{2}z_{x}.$$
Since $M$ has the properties as in Lemma \ref{lemma:f}, the results below can be deduced easily by apllying the methods of Dessai in \cite{dessai}.

\begin{lemma}\label{lemma:m1}
Let $\tilde{\xi}\in\widetilde{KO}(M)$ be a stable oriented vector bundle over $M$. Then $\tilde{\xi}$ admits a stable complex structure if and only if 
\begin{equation*}
\rho_{2}\circ p_{\ast}^{-1}(ch_{4}(\tilde{c}_{M}(\tilde{r}_{M}(\tilde{\eta})-\tilde{\xi})))\in\mathrm{Sq}^{2}\rho_{2}H^{6}(M:\Z)
\end{equation*}
for some stable complex vector bunlde $\tilde{\eta}\in\widetilde{K}(M)$ satisfying $\tilde{r}_{M^{7}}i_{u}^{\ast}(\tilde{\eta})=i_{o}^{\ast}(\tilde{\xi})$.
\end{lemma}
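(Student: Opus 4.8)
The plan is to adapt Dessai's strategy: convert the existence of a stable complex structure into the vanishing of a single $KO$-theoretic obstruction and then detect that obstruction through the Chern character. First I would invoke Lemma~\ref{lemma:f}(e) to fix a stable complex bundle $\tilde{\eta}\in\widetilde{K}(M)$ with $\tilde{r}_{M^{7}}i_{u}^{\ast}(\tilde{\eta})=i_{o}^{\ast}(\tilde{\xi})$ and put $\tilde{\zeta}=\tilde{r}_{M}(\tilde{\eta})-\tilde{\xi}\in\widetilde{KO}(M)$. Since $\tilde{r}_{M}(\tilde{\eta})$ already lies in $\mathrm{Im}\,\tilde{r}_{M}$, the bundle $\tilde{\xi}$ admits a stable complex structure if and only if $\tilde{\zeta}\in\mathrm{Im}\,\tilde{r}_{M}$, and by the exactness of the Bott sequence \eqref{bsr} this happens exactly when $\gamma_{M}(\tilde{\zeta})=0$ in $KO^{-1}(M)$. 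So everything reduces to deciding when $\gamma_{M}(\tilde{\zeta})$ vanishes.

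Next I would locate $\gamma_{M}(\tilde{\zeta})$ in the Atiyah--Hirzebruch spectral sequence of $KO^{-1}(M)$. By the choice of $\tilde{\eta}$ the class $\tilde{\zeta}$ restricts to zero on $M^{7}$, hence so does $\gamma_{M}(\tilde{\zeta})$, i.e.\ $\gamma_{M}(\tilde{\zeta})\in F^{8,-9}$. Repeating the argument of Lemma~\ref{lemma:sur}, the hypothesis that $\mathrm{Sq}^{2}\colon H^{7}(M;\Z/2)\to H^{9}(M;\Z/2)$ is surjective together with $\dim M=10$ forces $F^{9,-10}=0$, so that $F^{8,-9}\cong E_{\infty}^{8,-9}$ and $\gamma_{M}(\tilde{\zeta})$ is faithfully detected by its image there. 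Reading off the two differentials adjacent to this spot from \eqref{d2} --- the incoming one being $d_{2}=\mathrm{Sq}^{2}\rho_{2}\colon H^{6}(M;\Z)\to H^{8}(M;\Z/2)$ --- gives
\[
E_{\infty}^{8,-9}\cong H^{8}(M;\Z/2)\big/\mathrm{Sq}^{2}\rho_{2}H^{6}(M;\Z),
\]
so $\gamma_{M}(\tilde{\zeta})=0$ if and only if the image of $\gamma_{M}(\tilde{\zeta})$ in this quotient is zero.

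The remaining task is to identify that image with $\mathfrak{o}:=\rho_{2}\circ p_{\ast}^{-1}(ch_{4}(\tilde{c}_{M}(\tilde{\zeta})))$. Because $\tilde{\zeta}$, and hence $\tilde{c}_{M}(\tilde{\zeta})$, restricts to zero on $M^{7}=M^{2\cdot 4-1}$, the class $\tilde{c}_{M}(\tilde{\zeta})$ has complex filtration $\ge 8$ and its leading Chern character component $ch_{4}(\tilde{c}_{M}(\tilde{\zeta}))$ is the image under $p_{\ast}$ of an integral class (the edge homomorphism of the complex spectral sequence); by Lemma~\ref{lemma:f}(b) the operator $\rho_{2}\circ p_{\ast}^{-1}$ is then defined on it, so $\mathfrak{o}$ makes sense. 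On coefficients the Bott map $\gamma$ is the reduction $KO^{0}=\Z\to KO^{-1}=\Z/2$, complexification is an isomorphism on the relevant degree-$8$ coefficients, and the Chern character is the edge map of the complex spectral sequence; chasing $\tilde{\zeta}$ through the complexification and realification maps relating the real and complex spectral sequences should therefore show that the $E_{\infty}^{8,-9}$-image of $\gamma_{M}(\tilde{\zeta})$ is precisely $\mathfrak{o}$. Granting this, $\gamma_{M}(\tilde{\zeta})=0$ if and only if $\mathfrak{o}\in\mathrm{Sq}^{2}\rho_{2}H^{6}(M;\Z)$, which is the assertion. The clause ``for some $\tilde{\eta}$'' needs no extra work: for the necessity one takes $\tilde{\eta}$ with $\tilde{r}_{M}(\tilde{\eta})=\tilde{\xi}$, whence $\tilde{\zeta}=0$ and $\mathfrak{o}=0$, while for sufficiency any admissible $\tilde{\eta}$ satisfying the congruence already forces $\gamma_{M}(\tilde{\zeta})=0$.

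I expect the genuine difficulty to be exactly this last identification. Turning the rational invariant $ch_{4}(\tilde{c}_{M}(\tilde{\zeta}))$ into the $\Z/2$-valued obstruction $\gamma_{M}(\tilde{\zeta})$ requires a careful compatibility check between the real and complex Atiyah--Hirzebruch spectral sequences under $\tilde{c}_{M}$ and $\gamma$, keeping track of the coefficient homomorphisms and of the differentials $d_{2}$ in \eqref{d2}; this is the step where the structural properties gathered in Lemma~\ref{lemma:f} (in particular (b)) and Dessai's Chern-character bookkeeping must be used with care.
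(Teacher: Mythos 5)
The paper's own ``proof'' of this lemma is a one-line citation to Dessai's Lemmas 1.7--1.8, so there is no explicit argument to compare against; your proposal reconstructs exactly the argument that citation points to, and its strategy is sound. The reduction to $\gamma_{M}(\tilde{\zeta})=0$ via the Bott sequence \eqref{bsr}, the observation that $\tilde{\zeta}$ (hence $\gamma_{M}(\tilde{\zeta})$) restricts to zero on $M^{7}$, the vanishing of $F^{9,-10}$ from the surjectivity of $\mathrm{Sq}^{2}\colon H^{7}\to H^{9}$ together with $KO^{-11}=KO^{-3}=0$, and the detection of the resulting class in the $(8,-9)$ spot are all correct. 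Your device of proving necessity by taking $\tilde{\eta}$ with $\tilde{r}_{M}(\tilde{\eta})=\tilde{\xi}$, so that $\tilde{\zeta}=0$, is the right way to exploit the ``for some $\tilde{\eta}$'' in the statement; this matters because your displayed description of $E_{\infty}^{8,-9}$ is not literally correct (it ignores the outgoing $d_{2}=\mathrm{Sq}^{2}$ into $H^{10}(M;\Z/2)$ and possible higher incoming differentials from the $(2,-4)$ and $(0,-2)$ spots), and the sufficiency direction only needs the inclusion $\mathrm{Sq}^{2}\rho_{2}H^{6}(M;\Z)\subseteq B_{\infty}^{8,-9}$, which your argument does supply. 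The one step you leave as a sketch is the crux: that $\rho_{2}\circ p_{\ast}^{-1}(ch_{4}(\tilde{c}_{M}(\tilde{\zeta})))$ is a permanent-cycle representative of the class of $\gamma_{M}(\tilde{\zeta})$ in $E_{\infty}^{8,-9}$. The ingredients you name — the Atiyah--Hirzebruch edge-homomorphism description of $ch_{4}$ for classes vanishing on $M^{7}$ (which also justifies applying $\rho_{2}\circ p_{\ast}^{-1}$ via Lemma \ref{lemma:f}(b)), the fact that complexification is an isomorphism $KO^{-8}\to K^{-8}$ on coefficients, and that $\gamma$ is multiplication by the generator of $KO^{-1}$ and hence acts on $E_{2}^{8,\ast}$ as reduction mod $2$ — do assemble into a complete proof, but as written this remains a plan at exactly the point where the work lies; it is the content of the known computations of the obstruction $\mathfrak{o}_{8}(\eta)$ that the paper quotes in its introduction, and a full write-up would need to verify the naturality of the two spectral sequences under $\tilde{c}$ and $\gamma$ rather than assert it.
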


\begin{lemma}\label{lemma:m2}
Let $c\in H^{2}(M;\Z)$ be an integral class satisfying $\rho_{2}(c)=w_{2}(M).$ For any $x\in \mathcal{D}(M)$ there is a stable complex vector bundle $\tilde{\eta}_{x}\in\widetilde{K}(M)$ trivial over the $3$-skeleton such that
\begin{equation*}
e^{c/2}\cdot ch(\tilde{l}_{x}-\tilde{\eta}_{x})\equiv x+(\frac{x^{3}}{6}-\frac{xc^{2}}{8}-\frac{v_{x}}{2})\mod H^{\ge8}(M;\Q).
\end{equation*} 
\end{lemma}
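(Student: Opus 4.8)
The plan is to reduce the stated congruence to a concrete realization problem — the existence of a stable complex bundle trivial over $M^3$ with two prescribed low-degree Chern data — and then to solve that problem by obstruction theory, where the defining property of $v_x$ is exactly what clears the only relevant obstruction. First I would carry out the Chern-character bookkeeping. Since $\tilde{l}_x$ is the reduced class of the line bundle $l_x$, we have $ch(\tilde{l}_x)=e^{x}-1$, so that
\[
e^{c/2}\cdot ch(\tilde{l}_x-\tilde{\eta}_x)=e^{x+c/2}-e^{c/2}-e^{c/2}\cdot ch(\tilde{\eta}_x).
\]
Expanding $e^{x+c/2}-e^{c/2}$ degree by degree and substituting $x^{2}+cx=2z_{x}$, its homogeneous components are $x$ in degree $2$, $z_{x}$ in degree $4$, and $\tfrac{x^{3}}{6}+\tfrac{x^{2}c}{4}+\tfrac{xc^{2}}{8}$ in degree $6$. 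Requiring $\tilde{\eta}_x$ to be trivial over the $3$-skeleton forces $ch(\tilde{\eta}_x)$ to begin in degree $4$, and comparing with the target modulo $H^{\ge 8}(M;\Q)$ (again using $x^{2}+cx=2z_{x}$ to simplify the degree-$6$ term) shows that the congruence holds if and only if $ch_{2}(\tilde{\eta}_x)=z_{x}$ and $ch_{3}(\tilde{\eta}_x)=\tfrac{v_{x}}{2}$. Because $c_{1}(\tilde{\eta}_x)=0$, these are in turn equivalent to $c_{2}(\tilde{\eta}_x)=-z_{x}$ and $c_{3}(\tilde{\eta}_x)=v_{x}$.

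It then remains to construct $\tilde{\eta}_x\in\widetilde{K}(M)$, trivial over $M^{3}$, with $c_{1}=0$, $c_{2}=-z_{x}$ and $c_{3}=v_{x}$. The plan is to build it first over $M^{7}$ and then to extend it to all of $M$ by Lemma \ref{lemma:f}(d) (which itself rests on Corollary \ref{coro:extend}); since only the identity modulo $H^{\ge 8}(M;\Q)$ is needed, the low-degree Chern data computed on $M^{7}$ are sufficient. A bundle trivial on $M^{3}$ with $c_{2}=-z_{x}$ is produced from the identification of $\widetilde{K}(M^{4},M^{3})$ with the degree-$4$ cellular cochains via the second Chern class; the obstruction to promoting it to a class on $M^{7}$, and to adjusting its third Chern class to the prescribed value $v_{x}$, is governed by the first nonvanishing differential $d_{3}=\beta\,\mathrm{Sq}^{2}\rho_{2}$ of the Atiyah--Hirzebruch spectral sequence for $\widetilde{K}(M)$, together with the mod-$2$ Wu relation $\mathrm{Sq}^{2}w_{4}=w_{2}w_{4}+w_{6}$ for the underlying real bundle.

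The defining relation $\rho_{2}(v_{x})=\mathrm{Sq}^{2}\rho_{2}(z_{x})$ is precisely what makes both of these obstructions vanish. On the one hand, $c_{1}=0$ forces $w_{2}=0$, so the Wu relation reduces to $\rho_{2}c_{3}=\mathrm{Sq}^{2}\rho_{2}c_{2}$; with $c_{2}=-z_{x}$ this is satisfied exactly by the integral lift $c_{3}=v_{x}$. On the other hand, $d_{3}(z_{x})=\beta\,\mathrm{Sq}^{2}\rho_{2}(z_{x})=\beta\rho_{2}(v_{x})=0$, so $z_{x}$ is a permanent cycle and is realized as the leading Chern-character term of a genuine $K$-theory class. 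Combining these with the extension provided by Lemma \ref{lemma:f}(d) yields the desired $\tilde{\eta}_x$, and the first paragraph then gives the congruence.

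I expect the main obstacle to be this realization step: verifying that $c_{2}=-z_{x}$ and $c_{3}=v_{x}$ can be realized \emph{simultaneously} by an honest bundle trivial over $M^{3}$, rather than merely rationally, and pinning down that the single obstruction which could fail is exactly the one annihilated by the hypothesis on $v_{x}$. This is where the torsion assumptions on $M$ enter through Lemma \ref{lemma:f}(d), ensuring that the remaining extension obstructions in odd degrees are controllable; by contrast, the Chern-character expansion of the first paragraph, though somewhat lengthy, is entirely routine.
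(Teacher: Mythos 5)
Your proposal is correct and takes essentially the route of the proof the paper relies on: the paper itself only refers to Dessai's Lemma 1.8, and the argument there is exactly your Chern--character reduction to the conditions $c_{1}=0$, $c_{2}=-z_{x}$, $c_{3}=v_{x}$, followed by an obstruction-theoretic/AHSS realization over $M^{7}$ (where only $d_{3}=\beta\,\mathrm{Sq}^{2}\rho_{2}$ can act and it vanishes on $z_{x}$ because $v_{x}$ is an integral lift of $\mathrm{Sq}^{2}\rho_{2}(z_{x})$) and the extension to $M$ supplied by Lemma \ref{lemma:f}(d). The one step you should write out rather than gesture at is the final normalization of $c_{3}$: for any $\tilde{\eta}'$ trivial over $M^{3}$ with $c_{2}=-z_{x}$ the Wu relation forces only $\rho_{2}c_{3}(\tilde{\eta}')=\rho_{2}(v_{x})$, hence $c_{3}(\tilde{\eta}')-v_{x}\in 2H^{6}(M;\Z)$, and this discrepancy is removed by adding a class trivial over $M^{5}$ with prescribed integral $ch_{3}$, which exists because every element of $H^{6}$ survives the spectral sequence over the $7$-skeleton for dimension reasons.
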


\begin{lemma}\label{lemma:m3}
Let $M$ be a $10$-dimensional closed oriented smooth manifold as in Theorem $2$. Let $\tilde{\xi}\in \widetilde{KO}(M)$ be a stable real vector bundle over $M$. Choose $c\in H^{2}(M;\Z)$ (resp. $d\in H^{2}(M;\Z)$) satisfying $\rho_{2}(c)=w_{2}(M)$ (resp. $\rho_{2}(d)=w_{2}(\tilde{\xi})$). Then $\tilde{\xi}$ admits a stable complex structure if and only if 
\begin{equation}\label{eq:m3}
\langle \hat{\mathfrak{A}}(M)\cdot e^{c/2}\cdot ch(\tilde{l}_{x}-\tilde{\eta}_{x})\cdot ch(\tilde{c}_{M}(\tilde{\xi}-\tilde{l}_{d})),[M]\rangle\equiv0\mod2
\end{equation}
holds for every $x\in \mathfrak{D}(M)$.
\end{lemma}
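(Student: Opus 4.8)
The plan is to start from the $K$-theoretic criterion of Lemma~\ref{lemma:m1} and convert it, by Poincar\'e duality over $\Z/2$ together with the bundles constructed in Lemma~\ref{lemma:m2}, into the single congruence \eqref{eq:m3}; the integrality needed to control the mod-$2$ reduction will be supplied by the Differentiable Riemann--Roch theorem. First I would use Lemma~\ref{lemma:f}(e) to fix a stable complex bundle $\tilde\eta\in\widetilde K(M)$ with $\tilde r_{M^7}i_u^*(\tilde\eta)=i_o^*(\tilde\xi)$. Then Lemma~\ref{lemma:m1} asserts that $\tilde\xi$ admits a stable complex structure if and only if
\[
\alpha:=\rho_2\circ p_*^{-1}\bigl(ch_4(\tilde c_M(\tilde r_M(\tilde\eta)-\tilde\xi))\bigr)\in H^8(M;\Z/2)
\]
lies in $\mathrm{Sq}^2\rho_2 H^6(M;\Z)$, this class being well defined by Lemma~\ref{lemma:f}(b) because $\tilde r_M(\tilde\eta)-\tilde\xi$ is trivial over $M^7$, so its complexified Chern character is concentrated in degrees $\ge 8$.

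Next I would detect membership in $\mathrm{Sq}^2\rho_2 H^6(M;\Z)$ by duality. Since $M$ is a closed connected $10$-manifold, the cup-product pairing $H^8(M;\Z/2)\times H^2(M;\Z/2)\to H^{10}(M;\Z/2)\cong\Z/2$ is perfect, so a class in $H^8$ lies in the subgroup $\mathrm{Sq}^2\rho_2 H^6(M;\Z)$ precisely when it annihilates that subgroup's annihilator; by Lemma~\ref{lemma:f}(c) this annihilator equals $\rho_2(\mathcal D(M))$. Hence the criterion becomes $\langle\alpha\cdot\rho_2(x),[M]\rangle=0$ for every $x\in\mathcal D(M)$, and, lifting back to $\Z$ through Lemma~\ref{lemma:f}(b), this is the congruence
\[
\bigl\langle p_*^{-1}ch_4(\tilde c_M(\tilde r_M(\tilde\eta)-\tilde\xi))\cdot x,\ [M]\bigr\rangle\equiv 0 \pmod 2\qquad\text{for every }x\in\mathcal D(M).
\]

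The heart of the argument is to rewrite this pairing as the mod-$2$ reduction of a $\mathrm{spin}^c$ index. Since $M$ is $\mathrm{spin}^c$ with class $c$, the Differentiable Riemann--Roch theorem guarantees that $\langle\hat{\mathfrak A}(M)\,e^{c/2}\,ch(F),[M]\rangle\in\Z$ for every complex $F$. I would take $F=(\tilde l_x-\tilde\eta_x)\cdot\tilde c_M(\tilde\xi-\tilde l_d)\in\widetilde K(M)$, so that $ch(F)=ch(\tilde l_x-\tilde\eta_x)\,ch(\tilde c_M(\tilde\xi-\tilde l_d))$, and expand the resulting integer. By Lemma~\ref{lemma:m2} the probe $e^{c/2}ch(\tilde l_x-\tilde\eta_x)$ begins with $x$ in degree $2$, and pairing this leading term against the degree-$8$ Chern character produces exactly the integer on the left of the last congruence. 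The relation $\tilde c_M\tilde r_M(\tilde\eta)=\tilde\eta+\overline{\tilde\eta}$ shows that the $\tilde\eta$-dependence enters only through terms of the shape $2\,ch_4(\tilde\eta)$, which are even once the relevant combination is known to be integral via Riemann--Roch; thus the explicit $\tilde\eta$ disappears modulo $2$ and only the $\tilde\eta$-free expression \eqref{eq:m3} survives. The auxiliary line bundle $\tilde l_d$, chosen with $\rho_2(d)=w_2(\tilde\xi)$, normalizes the $\mathrm{spin}^c$ twist, while all remaining graded pieces of the product either pair against Chern-character components that vanish — because $\tilde\eta_x$ is trivial on the $3$-skeleton and $\tilde r_M(\tilde\eta)-\tilde\xi$ on the $7$-skeleton — or are manifestly even.

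The main obstacle is this last step: the careful bookkeeping of which graded components of $\hat{\mathfrak A}(M)\,e^{c/2}\,ch(\tilde l_x-\tilde\eta_x)\,ch(\tilde c_M(\tilde\xi-\tilde l_d))$ survive evaluation on $[M]$, and the verification that every surviving term other than the target pairing is even. Here the precise normalizations built into Lemma~\ref{lemma:m2} — the degree-$6$ correction $\frac{x^3}{6}-\frac{xc^2}{8}-\frac{v_x}{2}$ and the triviality of $\tilde\eta_x$ over the $3$-skeleton — must be used in concert with the Riemann--Roch integrality, since the denominators appearing in $\hat{\mathfrak A}(M)$ and in $e^{c/2}$ cancel, and the individually non-integral Chern-character pieces become integral with controlled parity, only once the $\mathrm{spin}^c$ index is known to be an integer.
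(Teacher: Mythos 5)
Your outline follows exactly the route the paper takes: the paper's own proof of Lemma \ref{lemma:m3} is nothing more than a citation of Dessai's Lemmas 1.7, 1.8 and Theorem 1.9, and the chain you describe --- Lemma \ref{lemma:m1}, then Poincar\'e duality over $\Z/2$ together with Lemma \ref{lemma:f}(c) to convert membership in $\mathrm{Sq}^{2}\rho_{2}H^{6}(M;\Z)$ into vanishing of cup-product pairings against $\rho_{2}(\mathcal{D}(M))$, then the probe bundles of Lemma \ref{lemma:m2} and the $spin^{c}$ Riemann--Roch integrality --- is precisely that strategy.

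The one place where your sketch has a genuine hole is the step you yourself flag: eliminating $\tilde{\eta}$. From $\tilde{c}_{M}\tilde{r}_{M}(\tilde{\eta})=\tilde{\eta}+\overline{\tilde{\eta}}$ the $\tilde{\eta}$-contribution to the degree-$10$ pairing is $2\langle x\cdot ch_{4}(\tilde{\eta}),[M]\rangle$, and this is even only if $\langle x\cdot ch_{4}(\tilde{\eta}),[M]\rangle$ is itself an integer. That integrality is not supplied by a single application of Riemann--Roch: $\tilde{\eta}$ is not trivial over $M^{7}$ (only $\tilde{r}_{M}(\tilde{\eta})-\tilde{\xi}$ is), so the integer $\langle\hat{\mathfrak{A}}(M)\,e^{c/2}\,ch(\tilde{l}_{x}-\tilde{\eta}_{x})\,ch(\tilde{\eta}),[M]\rangle$ contains additional terms involving $ch_{1},ch_{2},ch_{3}$ of $\tilde{\eta}$ and the higher-degree pieces of $e^{c/2}ch(\tilde{l}_{x}-\tilde{\eta}_{x})$, and does not isolate $\langle x\cdot ch_{4}(\tilde{\eta}),[M]\rangle$. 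The standard repair, and the one implicit in Dessai's argument, is to work with the index twisted by the full class $\tilde{\eta}+\overline{\tilde{\eta}}=\tilde{c}_{M}\tilde{r}_{M}(\tilde{\eta})$ and use conjugation symmetry of the $spin^{c}$ index to see that the sum of the two twisted indices is even; without some such device, the assertion that ``the $\tilde{\eta}$-free expression survives mod $2$'' is exactly the claim to be proved, not a consequence of what precedes it. Everything else in your plan (the well-definedness via Lemma \ref{lemma:f}(b), the double-annihilator step, the use of the degree-$2$ leading term $x$ of $e^{c/2}ch(\tilde{l}_{x}-\tilde{\eta}_{x})$ against the degree-$8$ concentration of $ch(\tilde{c}_{M}(\tilde{r}_{M}(\tilde{\eta})-\tilde{\xi}))$) is sound.
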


\begin{proof}[Proof of the Lemmas \ref{lemma:m1}, \ref{lemma:m2} and \ref{lemma:m3}]
cf. the proves of \cite[Lemmas 1.7, 1.8, Theorem 1.9]{dessai}.
\end{proof}

\begin{remark}
Since $M$ is $spin^{c}$, it follow from the Differentiable Riemann-Roch theorem (cf. Atiyah-Hirzebruch \cite[Corollary 1]{ah}) that the rational number 
$$\langle \hat{\mathfrak{A}}(M)\cdot e^{c/2}\cdot ch(\tilde{l}_{x}-\tilde{\eta}_{x})\cdot ch(\tilde{c}_{M}(\tilde{\xi}-\tilde{l}_{d})),[M]\rangle$$ is integral, so it make sense to take congruent classes modulo $2$.
\end{remark}

In fact, the congruence \eqref{eq:m3} can be simplified, hence Lemma \ref{lemma:m3} can be restated as follows.

\begin{theorem}\label{thm}
Let $M$ be a $10$-dimensional closed oriented smooth manifold with no $2$-torsion in $H_{i}(M;\Z), ~i=1,2,3$, no $3$-torsion in $H_{1}(M;\Z)$. Suppose that the Steenrod square 
$$\mathrm{Sq}^{2}\colon H^{7}(M;\Z/2)\rightarrow H^{9}(M;\Z/2)$$
is surjective. Let $\xi$ be a real vector bundle over $M$. Choose integral class $d\in H^{2}(M;\Z)$ such that $\rho_{2}(d)=w_{2}(\xi).$ Set
\begin{equation*}
A_{\xi,x}=\langle\frac{x}{2}\cdot\frac{p_{1}(\xi)-d^{2}}{2}\cdot(\frac{p_{1}(\xi)-d^{2}}{2}-\frac{p_{1}(M)-c^{2}}{2}), [M]\rangle.
\end{equation*}
Then $\xi$ admits a stable complex structure if and only if 
\begin{equation}\label{scs}
A_{\xi,x}\equiv (w_{8}(\xi)+w_{2}(\xi)\mathrm{Sq}^{2}(w_{4}(\xi)))\cdot\rho_{2}(x)+\mathrm{Sq}^{2}(z_{x})w_{4}(\xi)\mod2
\end{equation}
holds for every $x\in\mathcal{D}(M)$.
\end{theorem}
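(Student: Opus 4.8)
The plan is to obtain \eqref{scs} directly from the mod-$2$ congruence \eqref{eq:m3} of Lemma \ref{lemma:m3} by expanding its integrand in characteristic classes and isolating the degree-$10$ component before pairing with $[M]$. The integrand has two nontrivial factors. For the $x$-dependent factor I would insert the expansion of Lemma \ref{lemma:m2}, so that $e^{c/2}\cdot ch(\tilde{l}_x-\tilde{\eta}_x)$ contributes only its degree-$2$ piece $x$ and its degree-$6$ piece $\frac{x^{3}}{6}-\frac{xc^{2}}{8}-\frac{v_x}{2}$. For the $\xi$-dependent factor I would use the standard Chern character of a complexification: writing $\tilde{\zeta}=\tilde{\xi}-\tilde{r}_M\tilde{l}_d$, which is reduced, the class $\tilde{c}_M\tilde{\zeta}$ is self-conjugate (being a complexification), so all its odd Chern-character components vanish and
$$ch(\tilde{c}_M\tilde{\zeta})=\big(p_{1}(\xi)-d^{2}\big)+\frac{p_{1}^{2}(\xi)-2p_{2}(\xi)-d^{4}}{12}+\cdots,$$
with vanishing components in cohomological degrees $0$ and $2$.

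The first key observation is a structural collapse. Since every $K$-theory class in sight is reduced (virtual dimension $0$) and $ch(\tilde{c}_M\tilde{\zeta})$ has no components in degrees $0$ and $2$, the degree-$8$ and degree-$10$ parts of the $x$-factor---which are precisely the parts not supplied by Lemma \ref{lemma:m2}---get multiplied by the vanishing degree-$2$ and degree-$0$ parts of $\hat{\mathfrak{A}}(M)\,ch(\tilde{c}_M\tilde{\zeta})$ and therefore drop out. Only two products survive, and the degree-$10$ integrand $G_{10}$ reduces to
$$G_{10}=x\cdot\Big[\hat{\mathfrak{A}}(M)\,ch(\tilde{c}_M\tilde{\zeta})\Big]_{8}+\Big(\frac{x^{3}}{6}-\frac{xc^{2}}{8}-\frac{v_x}{2}\Big)\big(p_{1}(\xi)-d^{2}\big),$$
where $\big[\hat{\mathfrak{A}}(M)\,ch(\tilde{c}_M\tilde{\zeta})\big]_{8}=\frac{p_{1}^{2}(\xi)-2p_{2}(\xi)-d^{4}}{12}-\frac{p_{1}(M)\,(p_{1}(\xi)-d^{2})}{24}$. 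This removes any dependence on the unknown higher terms of Lemma \ref{lemma:m2} and is what makes the computation tractable.

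With $\langle G_{10},[M]\rangle$ in hand, the remaining task is to show
$$\langle G_{10},[M]\rangle\equiv A_{\xi,x}+\big(w_{8}(\xi)+w_{2}(\xi)\mathrm{Sq}^{2}w_{4}(\xi)\big)\rho_{2}(x)+\mathrm{Sq}^{2}(z_x)\,w_{4}(\xi)\pmod 2,$$
after which \eqref{eq:m3} becomes exactly \eqref{scs}. For this I would first record that $p_{1}(\xi)-d^{2}$ and $p_{1}(M)-c^{2}$ are divisible by $2$ (their mod-$2$ reductions vanish because $\rho_{2}p_{1}=w_{2}^{2}$, $\rho_{2}d=w_{2}(\xi)$ and $\rho_{2}c=w_{2}(M)$), which legitimizes the denominators $8$, $12$, $24$; the overall integrality needed in order to reduce modulo $2$ is the spin$^{c}$ differentiable Riemann--Roch statement recorded in the Remark following Lemma \ref{lemma:m3}. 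I would then reduce term by term, using $\rho_{2}p_{i}=w_{2i}^{2}$, the Wu relation $\langle\mathrm{Sq}^{2}y,[M]\rangle=\langle w_{2}(M)\,y,[M]\rangle$ for $y\in H^{8}(M;\Z/2)$ together with the Cartan/Wu formula for $\mathrm{Sq}^{2}$ on Stiefel--Whitney classes, the identity $\rho_{2}(v_x)=\mathrm{Sq}^{2}\rho_{2}(z_x)$, and the defining relation $x^{2}+cx=2z_x$ of $\mathcal{D}(M)$; the combination of the $\frac{1}{12}$- and $\frac18$-terms is what assembles $A_{\xi,x}=\langle\frac{x}{2}\cdot\frac{p_{1}(\xi)-d^{2}}{2}\cdot(\frac{p_{1}(\xi)-d^{2}}{2}-\frac{p_{1}(M)-c^{2}}{2}),[M]\rangle$.

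I expect the main obstacle to be exactly this final mod-$2$ bookkeeping of the terms carrying the denominators $8$, $12$ and $24$: one must keep careful track of the integral halves $\tfrac12(p_{1}(\xi)-d^{2})$ and $\tfrac12(p_{1}(M)-c^{2})$, use $\rho_{2}p_{2}=w_{4}^{2}$ together with the Wu relation to produce the $w_{8}(\xi)$- and $w_{2}(\xi)\mathrm{Sq}^{2}w_{4}(\xi)$-terms, and feed $x^{2}+cx=2z_x$ and $\rho_{2}v_x=\mathrm{Sq}^{2}\rho_{2}z_x$ into the degree-$6$ factor to recover $\mathrm{Sq}^{2}(z_x)\,w_{4}(\xi)$. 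Everything else is routine cup-product algebra evaluated on $[M]$, and once the correction terms are pinned down the equivalence of \eqref{eq:m3} and \eqref{scs} is immediate.
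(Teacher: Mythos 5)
Your overall route coincides with the paper's: start from the congruence \eqref{eq:m3} of Lemma \ref{lemma:m3}, note that in degree $10$ only the degree-$2$ and degree-$6$ components of $e^{c/2}\cdot ch(\tilde{l}_{x}-\tilde{\eta}_{x})$ (exactly what Lemma \ref{lemma:m2} supplies) pair against the degree-$8$ and degree-$4$ components of $\hat{\mathfrak{A}}(M)\cdot ch(\tilde{c}_{M}(\tilde{\xi}-\tilde{l}_{d}))$, and then reduce mod $2$ using $x^{2}+cx=2z_{x}$ and $\rho_{2}(v_{x})=\mathrm{Sq}^{2}\rho_{2}(z_{x})$. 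Your ``structural collapse'' and your assembly of $A_{\xi,x}$ from the $\tfrac{1}{12}$-, $\tfrac{1}{24}$- and $\tfrac{1}{8}$-terms are consistent with the paper's intermediate congruence $\frac{x}{2}q_{1}(F)[q_{1}(F)-\frac{p_{1}(M)-c^{2}}{2}]-q_{2}(F)x-v_{x}q_{1}(F)\equiv0\bmod 2$ (the paper multiplies by $3$ to clear denominators, which is harmless mod $2$).

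There is, however, a genuine gap in the tools you list for the final bookkeeping. The term $x\cdot\frac{p_{1}^{2}-2p_{2}}{12}$ forces you to halve the integral class $p_{2}(F)-\bigl(\tfrac{1}{2}p_{1}(F)\bigr)^{2}$ and then reduce the quotient mod $2$; this is precisely where the summand $w_{8}(\xi)+w_{2}(\xi)\mathrm{Sq}^{2}w_{4}(\xi)$ on the right of \eqref{scs} originates. The relation $\rho_{2}p_{2}=w_{4}^{2}$ only shows that $p_{2}(F)-q_{1}(F)^{2}$ is divisible by $2$; it does not determine the mod-$2$ class of the half, and no Wu-formula manipulation recovers it. The paper supplies the missing input by observing that $F=\tilde{\xi}-\tilde{l}_{d}$ is a stable spin bundle and invoking Thomas's spin characteristic classes: $p_{1}(F)=2q_{1}(F)$, $p_{2}(F)=2q_{2}(F)+q_{1}(F)^{2}$ with $\rho_{2}(q_{1}(F))=w_{4}(F)$ and, crucially, $\rho_{2}(q_{2}(F))=w_{8}(F)$; combined with $w_{8}(F)=w_{8}(\xi)+w_{2}(\xi)w_{6}(\xi)+w_{2}^{2}(\xi)w_{4}(\xi)$ and $w_{6}(\xi)=\mathrm{Sq}^{2}w_{4}(\xi)+w_{2}(\xi)w_{4}(\xi)$ this produces the $w_{8}$-term. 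Without $\rho_{2}(q_{2}(F))=w_{8}(F)$ (or an equivalent mod-$2$ statement about $ch_{4}$ of a spin bundle) your computation cannot reach the right-hand side of \eqref{scs}; the remaining reductions you describe do match the paper.
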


\begin{remark}
One may find that the rational number $A_{\xi, x}$  is integral (see the proof of this Theorem below), so it make sense to take congruent classes modulo $2$.
\end{remark}

\begin{proof}
Let $F=\tilde{\xi}-\tilde{l}_{d}$.  Then $F$ is a  stable $spin$ vector bundle since $w_{2}(F)=0$. Therefore, the $spin$ characteristic classes 
$$q_{i}(F)\in H^{4i}(M;\Z), ~i=1,2,$$
of $F$ are defined, and they satisfy the following relations (cf. Thomas \cite{thomas62})
\begin{align*}
p_{1}(F)&=2q_{1}(F), &\rho_{2}(q_{1}(F))=w_{4}(F),\\
p_{2}(F)&=2q_{2}(F)+q_{1}^{2}(F), &\rho_{2}(q_{2}(F))=w_{8}(F).
\end{align*} 
Since we have the equations below
\begin{align*}
x^{3}&=2xz_{x}-2cz_{x}+c^{2}x,\\
q_{1}(F)&=(p_{1}(\xi)-d^{2})/2,\\
w_{4}(F)&=w_{4}(\xi),\\
w_{8}(F)&=w_{8}(\xi)+w_{2}(\xi)w_{6}(\xi)+w_{2}^{2}(\xi)w_{4}(\xi),\\
w_{6}(\xi)&=\mathrm{Sq}^{2}(w_{4}(\xi))+w_{2}(\xi)w_{4}(\xi),
\end{align*}
it follows that
\begin{align*}
&\langle \hat{\mathfrak{A}}(M)\cdot e^{c/2}\cdot ch(\tilde{l}_{x}-\tilde{\eta}_{x})\cdot ch(\tilde{c}_{M}(\tilde{\xi}-\tilde{l}_{d})),[M]\rangle\equiv0\mod2\\
\text{iff\quad}& 3\langle \hat{\mathfrak{A}}(M)\cdot e^{c/2}\cdot ch(\tilde{l}_{x}-\tilde{\eta}_{x})\cdot ch(\tilde{c}_{M}(\tilde{\xi}-\tilde{l}_{d})),[M]\rangle\equiv0\mod2\\
\text{iff\quad}& \frac{x}{2}\cdot q_{1}(F)[q_{1}(F)-\frac{p_{1}(M)-c^{2}}{2}]-q_{2}(F)x-v_{x}q_{1}(F)\equiv 0 \mod 2\\
\text{iff\quad}& A_{\xi,x}\equiv (w_{8}(\xi)+w_{2}(\xi)\mathrm{Sq}^{2}(w_{4}(\xi)))\cdot\rho_{2}(x)+\mathrm{Sq}^{2}(z_{x})w_{4}(\xi)\mod2.
\end{align*}
\end{proof}

\begin{proof}[Proof of Theorem $2$]
Denote by $V_{i}\in H^{i}(M;\Z/2)$ the Wu-class which is the unique class satisfying 
$$\mathrm{Sq}^{i}u=V_{i}\cdot u$$
for any $u\in H^{10-i}(M;\Z)$. It is known that they satisfy (cf. \cite[p. 132]{ms74})
$$w_{k}(M)=\Sigma_{i=0}^{k}\mathrm{Sq}^{i}V_{k-i}.$$
Hence we get that
\begin{align*}
V_{2}&=w_{2}(M), \\
V_{4}&=w_{4}(M)+w_{2}^{2}(M), \\
V_{5}&=0,\\
w_{8}(M)&=w_{4}^{2}(M)+w_{2}^{4}(M).
\end{align*}

Note that for any $x\in \mathfrak{D}(M)$, we have
\begin{align*}
\rho_{2}^{2}(x)&=\rho_{2}(x)\cdot w_{2}(M).
\end{align*}
Therefore, for any  $x\in \mathfrak{D}(M)$, we can get that
\begin{align*}
\rho_{2}(x)w_{2}(M)\mathrm{Sq}^{2}w_{4}(M)&=\mathrm{Sq}^{2}(\rho_{2}(x)w_{2}(M)w_{4}(M))\\
&=\rho_{2}(x)w_{2}^{2}(M)w_{4}(M).
\end{align*}
Hence
\begin{align}\label{eq:m}
\rho_{2}(x)(w_{2}^{4}(M)+w_{2}(M)\mathrm{Sq}^{2}w_{4}(M))&=\rho_{2}(x)w_{2}^{2}(M)(w_{4}(M)+w_{2}^{2}(M))\\
\notag &=\mathrm{Sq}^{4}(\rho_{2}(x)w_{2}^{2}(M))\\ 
\notag &=\rho_{2}(x)w_{2}^{4}(M)\\ 
\notag &=\rho_{2}^{4}(x)w_{2}(M)\\ 
\notag &=\mathrm{Sq}^{2}(\rho_{2}^{4}(x))\\ 
\notag &=0. 
\end{align}

Then Theorem $2$ can be deduced easily from the Theorem \ref{thm} and the identity \eqref{eq:m} by choosing that $\xi=TM$ and $d=c$.
\end{proof}

\begin{proof}[Proof of Corollary \ref{coro:h}]
If $M$ is $spin$, $\mathfrak{D}(M)$ generated by $2h$. Hence the congruence \eqref{sacs} is always true.

If $M$ is not $spin$, $\mathfrak{D}(M)$ generated by $h$ and $\rho_{2}(h)=w_{2}(M)$. Therefore, we only need to check the congruence \eqref{sacs}  for $x=h$.  Note that the Wu class $V_{5}$ is zero. In this case, $z_{x}=h^{2}$,
$$w_{4}^{2}(M)\rho_{2}(x)=w_{2}(M)w_{4}^{2}(M)=\mathrm{Sq}^{2}(w_{4}^{2}(M))=(\mathrm{Sq}^{1}w_{4}(M))^{2}=\mathrm{Sq}^{5}\mathrm{Sq}^{1}w_{4}(M)=0,$$
and 
$$\mathrm{Sq}^{2}(z_{x})w_{4}(M)=\mathrm{Sq}^{2}(h^{2})w_{4}(M)=0.$$
Hence the congruence \eqref{sacs} is always true for this case.

These prove the Corollary \ref{coro:h}.
\end{proof}



%

\bibliographystyle{amsplain}
\bibliography{Young}

\providecommand{\bysame}{\leavevmode\hbox to3em{\hrulefill}\thinspace}
\providecommand{\MR}{\relax\ifhmode\unskip\space\fi MR }
\providecommand{\MRhref}[2]{%
  \href{http://www.ams.org/mathscinet-getitem?mr=#1}{#2}
}
\providecommand{\href}[2]{#2}
\begin{thebibliography}{10}

\bibitem{ah}
M.~F. Atiyah and F.~Hirzebruch, \emph{Riemann-{R}och theorems for
  differentiable manifolds}, Bull. Amer. Math. Soc. \textbf{65} (1959),
  276--281.

\bibitem{AtHi61}
\bysame, \emph{Vector bundles and homogeneous spaces}, Proc. {S}ympos. {P}ure
  {M}ath., {V}ol. {III}, American Mathematical Society, Providence, R.I., 1961,
  pp.~7--38.

\bibitem{borel53}
A.~Borel, \emph{Sur la cohomologie des espaces fibr\'s principaus et des
  espaces homogenes de groupes de lie compacts}, Ann. of Math. \textbf{57}
  (1953), 115--207.

\bibitem{bo}
R.~Bott, \emph{The stable homotopy of the classical groups}, Ann. of Math. (2)
  \textbf{70} (1959), 313--337.

\bibitem{bottlk}
\bysame, \emph{Lectures on {$K(X)$}}, Mathematics Lecture Note Series, W. A.
  Benjamin, Inc., New York-Amsterdam, 1969.

\bibitem{ccv}
M.~{\v{C}}adek, M.~Crabb, and J.~Van{\v{z}}ura, \emph{Obstruction theory on
  $8$-manifolds}, Manuscripta Math. \textbf{127} (2008), no.~3, 167--186.

\bibitem{cv}
M.~{\v{C}}adek and J.~Van{\v{z}}ura, \emph{On complex structures in
  {$8$}-dimensional vector bundles}, Manuscripta Math. \textbf{95} (1998),
  no.~3, 323--330.

\bibitem{dessai}
A.~Dessai, \emph{Some remarks on almost and stable almost complex manifolds},
  Math. Nachr. \textbf{192} (1998), 159--172.

\bibitem{eh}
C.~Ehresmann, \emph{Sur les vari\'et\'es presque complexes}, Proceedings of the
  {I}nternational {C}ongress of {M}athematicians, {C}ambridge, {M}ass., 1950,
  vol. 2, 1952, pp.~412--419.

\bibitem{fujii}
M.~Fujii, \emph{{$K_{O}$}-groups of projective spaces}, Osaka J. Math.
  \textbf{4} (1967), 141--149.

\bibitem{heaps}
T.~Heaps, \emph{Almost complex structures on eight- and ten-dimensional
  manifolds}, Topology \textbf{9} (1970), 111--119.

\bibitem{ma}
W.~S. Massey, \emph{Obstructions to the existence of almost complex
  structures}, Bull. Amer. Math. Soc. \textbf{67} (1961), 559--564.

\bibitem{ms74}
John~W. Milnor and James~D. Stasheff, \emph{Characteristic classes}, Princeton
  University Press, Princeton, N. J.; University of Tokyo Press, Tokyo, 1974,
  Annals of Mathematics Studies, No. 76.

\bibitem{switzer}
R.~M. Switzer, \emph{Algebraic topology---homotopy and homology}, Classics in
  Mathematics, Springer-Verlag, Berlin, 2002.

\bibitem{thomas62}
E.~Thomas, \emph{On the cohomology groups of the classifying space for the
  stable spinor groups}, Bol. Soc. Math. Mex. (1962), 57--69.

\bibitem{thomas}
\bysame, \emph{Complex structures on real vector bundles}, Amer. J. Math.
  \textbf{89} (1967), 887--908.

\bibitem{wu}
W.T. Wu, \emph{Sur les classes caract\'eristiques des structures fibr\'ees
  sph\'eriques}, Actualit\'es Sci. Ind., no. 1183, Hermann \& Cie, Paris, 1952.

\bibitem{young15}
Huijun Yang, \emph{A note on stable complex structures on real vector bundles
  over manifolds}, Topology Appl. \textbf{189} (2015), 1--9.

\end{thebibliography}

\bigskip
\noindent
\emph{}\\
  {\small
  \begin{tabular}{@{\qquad}l}
    School of Mathematics and Statistics, Henan University\\
    Kaifeng 475004, Henan, China\\
    \textsf{yhj@amss.ac.cn}\\
  \end{tabular}

\end{document}